\documentclass[reqno,11pt]{amsart}
\usepackage{amssymb,amsmath,mathrsfs}
\usepackage{enumerate,tikz,bbm}
\usepackage{enumitem}
\usepackage{hyperref,fontawesome}

\usetikzlibrary{shapes.arrows,bending}
\usetikzlibrary{decorations.pathreplacing}
\usetikzlibrary{decorations.markings}
\usetikzlibrary{snakes,patterns}

\usetikzlibrary{shapes.geometric}
\usetikzlibrary{shapes.misc}

\tikzstyle{poly}=[outer sep=0pt, draw, line width=1.5, gray!70]
\tikzstyle{poly2}=[outer sep=0pt, draw, line width=1.3, brown!70]
\tikzstyle{root}=[draw, line width=1.5, gray!70,-{latex[blue!60,bend]}]
\tikzstyle{v}=[circle, inner sep=1.5, fill]
\tikzstyle{v0}=[circle, inner sep=1.8, fill=white, draw=black, thick]
\tikzstyle{v1}=[circle, inner sep=1.8, fill=orange!80!black, draw=black, thick]
\tikzstyle{road}=[rounded corners, line width=12, line cap=round, opacity=0.12]
\tikzstyle{scribble}=[snake=coil, segment aspect=0, line width=5, line cap=round, opacity=0.12]
\tikzstyle{label}=[brown!70]

\numberwithin{equation}{section}
\newtheorem{theorem}[equation]{Theorem}
\newtheorem{lemma}[equation]{Lemma}
\newtheorem{proposition}[equation]{Proposition}
\newtheorem{corollary}[equation]{Corollary}

\theoremstyle{definition}

\newtheorem*{remark}{Remark}

\def\merge{\mathbin{+\mkern-10mu+}}

\def\U{\mathsf{U}}
\def\D{\mathsf{D}}

\newcommand\dyckpath[3]{
\def\diam{0.08}
  \begin{scope}
    \draw[help lines] (#1) -- ++(#2*2,0);
    \draw[line width=1pt] (#1) foreach \dir in {#3}{ -- ++(\dir*90-45:1.41)};
    \draw[fill] (#1) circle (\diam);
    \draw[fill] (#1) foreach \dir in {#3}{ ++(\dir*90-45:1.41) circle (\diam)};
   \end{scope}
}

\definecolor{red1}{HTML}{B02400}
\colorlet{insertcol}{cyan!50}

\title{Rooted bicubic planar maps via Dyck paths}
\author{Juan B. Gil}
\author{Jackie N. Kaminski}
\email{jgil@psu.edu}
\email{jnk15@psu.edu}
\address{Penn State Altoona\\ 3000 Ivyside Park\\ Altoona, PA 16601}

\subjclass{05A19; 05C10}
\keywords{Rooted bicubic planar map, colored Dyck path, bipartite, 3-connected graph, cubic graph}

\begin{document}

\begin{abstract}
We provide a combinatorial proof of Tutte's decomposition of rooted bicubic planar maps into 3-connected components. Motivated by the framework of Bell transformations, we establish an explicit bijection between rooted bicubic planar maps on $2n$ vertices and Dyck paths of semilength $3n$ with ascents of length divisible by 3, where each $3j$-ascent is colored using one of $g_j$ colors corresponding to the rooted 3-connected bicubic maps on $2j$ vertices. Our bijection gives a constructive method for assembling all rooted bicubic planar maps from their 3-connected building blocks. We give a simple proof for the fact that every 3-connected bicubic planar map on $2n$ vertices with $n \geq 4$ can be obtained from a smaller primitive map through just two insertion operations that add either 4 or 6 vertices. Finally, we briefly discuss rootings of 3-connected bicubic maps, providing lower bounds on the minimal number of rootings and showing that prism graphs can be used in combination with our insertion operations to generate maps with the maximum of $6n$ distinct rootings for all $n \geq 11$.
\end{abstract}

\maketitle

\section{Introduction}

In 1963, Tutte \cite[Section~11]{Tutte} observed that ``Each rooted bicubic map can be represented as a multiple extension of a 3-connected bicubic core,'' a structural fact he encoded in the functional equation
\begin{equation} \label{eq:cubicMaps}
 F(x) = G\big(x(1+F(x))^3\big)
\end{equation}
relating the generating function $F(x)$ of rooted bicubic maps on $2n$ vertices to the generating function $G(x)$ of their 3-connected counterparts.

Let $F(x)=\sum\limits_{n=1}^\infty f_n x^n$ and $G(x)=\sum\limits_{n=1}^\infty g_n x^n$. As shown by Birmajer, Gil, and Weiner~ \cite{BGW19}, equation \eqref{eq:cubicMaps} means that the sequence $f=(f_n)_{n\in\mathbb{N}}$ is the Bell transform of $g=(g_n)_{n\in\mathbb{N}}$ with parameters $(3,0,-1,1)$. Consequently, we get
\begin{equation*}
   g_n = \sum_{k=1}^{n} \binom{-3n}{k-1} \frac{(k-1)!}{n!} B_{n,k}(1!f_1, 2!f_2, \dots),
\end{equation*}
where $B_{n,k}$ is the $(n,k)$-th partial Bell polynomial. 

\smallskip
It is known that $f_n = \dfrac{3(2n-1)!2^n}{(n-1)!(n+2)!}$ for $n\ge 1$, giving the sequence (see \cite[A000257]{oeis})
\[ 1, 3, 12, 56, 288, 1584, 9152, 54912, 339456, 2149888,\dots, \]
and by means of the above formula for $g_n$, we get the sequence (see \cite[A298358]{oeis})
\[ 1, 0, 0, 1, 0, 3, 7, 15, 63, 168, 561, 1881, 6110, 21087, 72174,\dots \]
The asymptotic behavior of the sequence $(g_n)_{n\in\mathbb{N}}$ has been recently determined by Noy, Requil\'e, and Ru\'e \cite{NRR24}, who show that the generating function $G(x)$ is algebraic and derive a precise asymptotic estimate for $g_n$.

\smallskip
The Bell transform connection between $f$ and $g$ implies (see Birmajer et al.~\cite{BGMW}) that 
\begin{quote}
There is a bijection between the set of rooted bicubic planar maps on $2n$ vertices and the set of Dyck paths of semilength $3n$ having ascents of length multiples of 3, such that each $3j$-ascent may be colored in $g_j$ different ways. 
\end{quote}

One of the goals of this paper is to provide a combinatorial proof of this statement and give a constructive way to assemble the rooted bicubic planar maps using the $3$-connected elements as building blocks (primitives). This is done in Section~\ref{sec:construction}, following a section on preliminary definitions and basic notation.

In Section~\ref{sec:primitives}, we discuss the primitive maps and prove that, for $n\ge 4$, {\em every} 3-connected bicubic planar map on $2n$ vertices can be constructed from a smaller one by means of two simple insertion operations where either 4 or 6 vertices are inserted into a primitive element to obtain a larger one. This result has been previously discussed, for example in work by Horev et al. \cite{HKKN12}, but we give a straightforward proof here. We also discuss rootings and give lower bounds for the minimal number of possible rootings. In addition, for every $n\ge 11$, we use our insertion operations on prism graphs to give examples of 3-connected bicubic planar maps on $2n$ vertices having a maximal number of $6n$ distinct rootings. There is only one such map on $18$ vertices ($n=9$), see Figure~\ref{fig:primitive18}, and no such map for $n=10$ or $n<9$.

Finally, in the appendix, we illustrate our construction for all rooted bicubic planar maps on 6 vertices and provide a larger example. 

\section{Preliminaries}
\label{sec:prelim}

In this section we review some definitions and establish some conventions for dealing with the main objects of interest for this article. 

\subsection{Rooted cubic planar maps}
A {\em rooted planar map} is a closed 2-cell embedding of a connected graph in a sphere with a specified directed edge, called the {\em root edge}, assigned with an orientation. The face that lies to the right of the root edge while following its orientation is the {\em root face}, and the vertex at the tail-end of the root is the {\em root vertex}. A planar map is {\em bicubic} if every vertex has degree 3 and if all of its vertices can be bipartition into dark or white colors in such a way that adjacent vertices are assigned different colors. We adopt the convention to let the root vertex be dark.

\begin{figure}[ht]
\begin{tikzpicture}
\begin{scope}
\node[regular polygon, regular polygon sides=4, minimum size=60] at (0,0) (A) {};
\draw[poly] (A.corner 2) .. controls +(45:12pt) and +(135:12pt) .. (A.corner 1);
\draw[poly] (A.corner 2) .. controls +(-45:12pt) and +(-135:12pt) .. (A.corner 1);
\draw[poly] (A.corner 3) .. controls +(45:12pt) and +(135:12pt) .. (A.corner 4);
\draw[poly] (A.corner 3) .. controls +(-45:12pt) and +(-135:12pt) .. (A.corner 4);
\draw[poly] (A.corner 1) -- (A.corner 4);
\draw[root] (A.corner 2) -- (A.corner 3);
\foreach \i in {1,3} \node[v0] at (A.corner \i) {};
\foreach \i in {2,4} \node[v1] at (A.corner \i) {};
\end{scope}
\begin{scope}[xshift=90]
\node[regular polygon, regular polygon sides=4, minimum size=60] at (0,0) (A) {};
\draw[poly] (A.corner 2) .. controls +(45:12pt) and +(135:12pt) .. (A.corner 1);
\draw[poly] (A.corner 2) .. controls +(-45:12pt) and +(-135:12pt) .. (A.corner 1);
\draw[poly] (A.corner 3) .. controls +(45:12pt) and +(135:12pt) .. (A.corner 4);
\draw[poly] (A.corner 3) .. controls +(-45:12pt) and +(-135:12pt) .. (A.corner 4);
\draw[poly] (A.corner 1) -- (A.corner 4);
\draw[poly] (A.corner 2) -- (A.corner 3);
\draw[root] (A.corner 2) .. controls +(-45:12pt) and +(-135:12pt) .. (A.corner 1);
\foreach \i in {1,3} \node[v0] at (A.corner \i) {};
\foreach \i in {2,4} \node[v1] at (A.corner \i) {};
\end{scope}
\begin{scope}[xshift=180]
\node[regular polygon, regular polygon sides=4, minimum size=60] at (0,0) (A) {};
\draw[poly] (A.corner 2) .. controls +(45:12pt) and +(135:12pt) .. (A.corner 1);
\draw[poly] (A.corner 2) .. controls +(-45:12pt) and +(-135:12pt) .. (A.corner 1);
\draw[poly] (A.corner 3) .. controls +(45:12pt) and +(135:12pt) .. (A.corner 4);
\draw[poly] (A.corner 3) .. controls +(-45:12pt) and +(-135:12pt) .. (A.corner 4);
\draw[poly] (A.corner 1) -- (A.corner 4);
\draw[poly] (A.corner 2) -- (A.corner 3);
\draw[root] (A.corner 2) .. controls +(45:12pt) and +(135:12pt) .. (A.corner 1);
\foreach \i in {1,3} \node[v0] at (A.corner \i) {};
\foreach \i in {2,4} \node[v1] at (A.corner \i) {};
\end{scope}
\end{tikzpicture}
\caption{Rooted bicubic planar maps on 4 vertices.}
\label{fig:4vertices}
\end{figure}

A bicubic planar map on $2n$ vertices has $3n$ edges and $n+2$ faces. We call a bicubic planar map {\em primitive} if it is 3-connected (short for 3-vertex-connected), that is, if the removal of any two vertices and their incident edges does not disconnect the map. It is known that a primitive bicubic planar map remains connected whenever any two edges are removed. There are no primitive bicubic planar maps on 4, 6, or 10 vertices. All primitive elements on 2, 8, and 12 vertices are shown in Figure~\ref{fig:basicprimitives}.

\subsection{Labeling algorithm}
For later purposes, we need an algorithm to label the edges of our primitive elements in a consistent manner. This will be done by means of a counter-clockwise walk (visiting all edges of the map, twice) similar to the one used to bijectively connect rooted planar maps and certain non-crossing Dyck word shuffles (see \cite[Sec.~1]{BGGP16}). First, we construct a {\em road} (wide path) that contains all vertices (and some edges) of the map according to the following algorithm:
\begin{itemize}
\item Start constructing the road at the root vertex and continue along the root edge to the adjacent vertex.
\item At each subsequent vertex, continue along the right-most edge whose endpoint has not been visited yet. In the absence of such an edge, turn around and travel back along the road (pausing the construction) until you reach the first vertex with an unvisited neighbor. Continue the construction along that edge; see Figure~\ref{fig:14primitive}.
\item Stop when all vertices are part of the road.
\end{itemize}

\begin{figure}[ht]
\begin{tikzpicture}
\node[shape=rounded rectangle, inner xsep=74, inner ysep=40, poly] at (3,0) (H) {};
\node[regular polygon, regular polygon sides=4, minimum size=40, rotate=45, poly] at (1.8,0) (A) {};
\node[regular polygon, regular polygon sides=4, minimum size=40, rotate=45, poly] at (4.2,0) (B) {};
\draw[poly] (A.corner 1) -- (H.north west);
\draw[poly] (A.corner 2) -- (H.west);
\draw[poly] (A.corner 3) -- (H.south west);
\draw[poly] (A.corner 4) -- (B.corner 2);
\draw[poly] (B.corner 1) -- (H.north east);
\draw[poly] (B.corner 4) -- (H.east);
\draw[root] (H.south east) -- (B.corner 3);
\foreach \i in {1,3} \node[v1] at (A.corner \i) {};
\foreach \i in {2,4} \node[v0] at (A.corner \i) {};
\foreach \i in {1,3} \node[v0] at (B.corner \i) {};
\foreach \i in {2,4} \node[v1] at (B.corner \i) {};
\foreach \vertex in {east,north west,south west} \node[v0] at (H.\vertex) {};
\foreach \vertex in {north east,west,south east} \node[v1] at (H.\vertex) {};
\draw[road] 
	(H.south east) -- (B.corner 3) -- (B.corner 4) -- 
	(H.east) .. controls +(95:23pt) and +(-5:23pt) .. (H.north east) -- 
	(H.north west) .. controls +(185:23pt) and +(85:23pt) .. (H.west) 
	.. controls +(275:23pt) and +(175:23pt) .. (H.south west) --
	(A.corner 3) -- (A.corner 4) -- (B.corner 2) -- (B.corner 1);
\draw[road, blue] (A.corner 4) -- (A.corner 1) -- (A.corner 2);
\end{tikzpicture}
\caption{Road constructed with back traveling.}
\label{fig:14primitive}
\end{figure}

Once the road is finished, assign to the root edge the label `1' and walk counter-clockwise around the boundary of the created road, starting at the root face. Whenever an edge of the map is reached for the first time (traveling parallel to it or crossing it), increase the labeling by one until all edges have been labeled.

This algorithm gives a unique labeling for each primitive rooted bicubic planar map. More examples are shown in Figure~\ref{fig:basicprimitives}.

\begin{figure}[ht]
\begin{tikzpicture}
\draw[poly] (0,0) .. controls +(45:23pt) and +(135:23pt) .. (2,0);
\draw[root] (0,0) .. controls +(-45:23pt) and +(-135:23pt) .. (2,0);
\draw[poly] (0,0) -- (2,0);
\node[v1] at (0,0) {}; \node[v0] at (2,0) {};
\draw[road] (0,0) .. controls +(-45:23pt) and +(-135:23pt) .. (2,0);
\node[label,below=10pt] at (1,0) {\tiny $1$};
\node[label,above=10pt] at (1,0) {\tiny $2$};
\node[label,above=-2pt] at (1,0) {\tiny $3$};
\end{tikzpicture}

\vskip10pt

\begin{tikzpicture}
\node[regular polygon, regular polygon sides=4, minimum size=40, poly] at (0,0) (A) {};
\node[regular polygon, regular polygon sides=4, minimum size=90, poly] at (0,0) (B) {};
\foreach \i in {1,...,4} {\draw[poly] (A.corner \i) -- (B.corner \i);}
\draw[root] (B.corner 3) -- (B.corner 4);
\foreach \i in {1,3}{
	\node[v0] at (A.corner \i) {};
	\node[v1] at (B.corner \i) {};
}
\foreach \i in {2,4}{
	\node[v1] at (A.corner \i) {};
	\node[v0] at (B.corner \i) {};
}
\draw[road] 
 (B.corner 3) -- (B.corner 4) -- (B.corner 1) -- (B.corner 2) -- 
 (A.corner 2) -- (A.corner 3) -- (A.corner 4) -- (A.corner 1);
\node[label,below=30] at (0,0) {\tiny $1$};
\node[label,right=30] at (0,0) {\tiny $2$};
\node[label,above=30] at (0,0) {\tiny $3$};
\node[label,left=30] at (0,0) {\tiny $4$};
\node[label,above=18,left=18] at (0,0) {\tiny $5$};
\node[label,left=12] at (0,0) {\tiny $6$};
\node[label,below=18,left=17] at (0,0) {\tiny $7$};
\node[label,below=12] at (0,0) {\tiny $8$};
\node[label,below=18,right=18] at (0,0) {\tiny $9$};
\node[label,right=12] at (0,0) {\tiny $10$};
\node[label,above=18,right=17] at (0,0) {\tiny $11$};
\node[label,above=13] at (0,0) {\tiny $12$};
\end{tikzpicture}

\vskip14pt

\begin{tikzpicture}
\begin{scope}
\node[regular polygon, regular polygon sides=6, minimum size=40, poly] at (0,0) (A) {};
\node[regular polygon, regular polygon sides=6, minimum size=90, poly] at (0,0) (B) {};
\foreach \i in {1,...,6} {\draw[poly] (A.corner \i) -- (B.corner \i);}
\draw[root] (B.corner 4) -- (B.corner 5);
\foreach \i in {1,3,5}{
	\node[v1] at (A.corner \i) {};
	\node[v0] at (B.corner \i) {};
}
\foreach \i in {2,4,6}{
	\node[v0] at (A.corner \i) {};
	\node[v1] at (B.corner \i) {};
}
\draw[road] 
 (B.corner 4) -- (B.corner 5) -- (B.corner 6) -- (B.corner 1) -- (B.corner 2) -- (B.corner 3) --
 (A.corner 3) -- (A.corner 4) -- (A.corner 5) -- (A.corner 6) -- (A.corner 1) -- (A.corner 2);
\node[label,below=37] at (0,0) {\tiny $1$};
\node[label,below=20, right=32] at (0,0) {\tiny $2$};
\node[label,above=20, right=32] at (0,0) {\tiny $3$};
\node[label,above=37] at (0,0) {\tiny $4$};
\node[label,above=20, left=32] at (0,0) {\tiny $5$};
\node[label,below=20, left=32] at (0,0) {\tiny $6$};
\node[label,below=4, left=24] at (0,0) {\tiny $7$};
\node[label,below=10, left=13] at (0,0) {\tiny $8$};
\node[label,below=25, left=13] at (0,0) {\tiny $9$};
\node[label,below=15] at (0,0) {\tiny $10$};
\node[label,below=30, right=4] at (0,0) {\tiny $11$};
\node[label,below=10, right=12] at (0,0) {\tiny $12$};
\node[label,below=4, right=24] at (0,0) {\tiny $13$};
\node[label,above=10, right=12] at (0,0) {\tiny $14$};
\node[label,above=25, right=12] at (0,0) {\tiny $15$};
\node[label,above=15] at (0,0) {\tiny $16$};
\node[label,above=30, left=3] at (0,0) {\tiny $17$};
\node[label,above=10, left=13] at (0,0) {\tiny $18$};
\end{scope}
\begin{scope}[xshift=110]
\node[regular polygon, regular polygon sides=6, minimum size=40, poly] at (0,0) (A) {};
\node[regular polygon, regular polygon sides=6, minimum size=90, poly] at (0,0) (B) {};
\foreach \i in {1,...,6} {\draw[poly] (A.corner \i) -- (B.corner \i);}
\draw[root] (B.corner 4) -- (A.corner 4);
\foreach \i in {1,3,5}{
	\node[v1] at (A.corner \i) {};
	\node[v0] at (B.corner \i) {};
}
\foreach \i in {2,4,6}{
	\node[v0] at (A.corner \i) {};
	\node[v1] at (B.corner \i) {};
}
\draw[road] 
 (B.corner 4) -- (A.corner 4) -- (A.corner 5) --
 (B.corner 5) -- (B.corner 6) -- (B.corner 1) -- (B.corner 2) -- (B.corner 3) --
 (A.corner 3) --  (A.corner 2) -- (A.corner 1) -- (A.corner 6);
\node[label,below=29, left=6] at (0,0) {\tiny $1$};
\node[label,below=15] at (0,0) {\tiny $2$};
\node[label,below=29, right=6] at (0,0) {\tiny $3$};
\node[label,below=29] at (0,0) {\tiny $4$};
\node[label,below=20, right=32] at (0,0) {\tiny $5$};
\node[label,above=20, right=32] at (0,0) {\tiny $6$};
\node[label,above=37] at (0,0) {\tiny $7$};
\node[label,above=20, left=32] at (0,0) {\tiny $8$};
\node[label,below=20, left=32] at (0,0) {\tiny $9$};
\node[label,below=4, left=24] at (0,0) {\tiny $10$};
\node[label,below=10, left=13] at (0,0) {\tiny $11$};
\node[label,above=7, left=2] at (0,0) {\tiny $12$};
\node[label,above=7] at (0,0) {\tiny $13$};
\node[label,above=7, right=2] at (0,0) {\tiny $14$};
\node[label,below=7, right=2] at (0,0) {\tiny $15$};
\node[label,below=4, right=24] at (0,0) {\tiny $16$};
\node[label,above=25, right=12] at (0,0) {\tiny $17$};
\node[label,above=30, left=3] at (0,0) {\tiny $18$};
\end{scope}
\begin{scope}[xshift=220]
\node[regular polygon, regular polygon sides=6, minimum size=40, poly] at (0,0) (A) {};
\node[regular polygon, regular polygon sides=6, minimum size=90, poly] at (0,0) (B) {};
\foreach \i in {1,...,6} {\draw[poly] (A.corner \i) -- (B.corner \i);}
\draw[root] (B.corner 4) -- (B.corner 3);
\foreach \i in {1,3,5}{
	\node[v1] at (A.corner \i) {};
	\node[v0] at (B.corner \i) {};
}
\foreach \i in {2,4,6}{
	\node[v0] at (A.corner \i) {};
	\node[v1] at (B.corner \i) {};
}
\draw[road] 
 (B.corner 4) -- (B.corner 3) --
 (A.corner 3) --  (A.corner 4) -- (A.corner 5) --
 (B.corner 5) -- (B.corner 6) -- (B.corner 1) -- (B.corner 2) --
 (A.corner 2) -- (A.corner 1) -- (A.corner 6);
\node[label,below=16, left=25] at (0,0) {\tiny $1$};
\node[label,below=4, left=24] at (0,0) {\tiny $2$};
\node[label,below=10, left=13] at (0,0) {\tiny $3$};
\node[label,below=25, left=13] at (0,0) {\tiny $4$};
\node[label,below=15] at (0,0) {\tiny $5$};
\node[label,below=30, right=7] at (0,0) {\tiny $6$};
\node[label,below=29] at (0,0) {\tiny $7$};
\node[label,below=20, right=32] at (0,0) {\tiny $8$};
\node[label,above=20, right=32] at (0,0) {\tiny $9$};
\node[label,above=37] at (0,0) {\tiny $10$};
\node[label,above=20, left=32] at (0,0) {\tiny $11$};
\node[label,above=26, left=13] at (0,0) {\tiny $12$};
\node[label,above=10, left=13] at (0,0) {\tiny $13$};
\node[label,above=7] at (0,0) {\tiny $14$};
\node[label,above=7, right=2] at (0,0) {\tiny $15$};
\node[label,below=7, right=2] at (0,0) {\tiny $16$};
\node[label,below=4, right=24] at (0,0) {\tiny $17$};
\node[label,above=25, right=12] at (0,0) {\tiny $18$};
\end{scope}
\end{tikzpicture}
\caption{Edge-labeled primitive rooted bicubic planar maps on 2, 8, and 12 vertices, together with their corresponding road used for labeling.}
\label{fig:basicprimitives}
\end{figure}

\subsection{Colored/decorated Dyck paths}
A {\em Dyck path} of semilength $n$ (or $n$-Dyck path) is a lattice path from $(0,0)$ to $(2n,0)$ consisting of steps $\U=(1,1)$ and $\D=(1,-1)$, never going below the $x$-axis. Clearly, every Dyck path can be encoded by a {\em Dyck word} over the alphabet $\{\U,\D\}$. We will freely pass from paths to words and back.  

Given an $n$-Dyck path, we will label its $\U$-steps as follows. If the first ascent has $j_1$ $\U$-steps, we label them with the numbers $1$ through $j_1$ in reversed order. Iteratively, if the $i$th ascent has $j_i$ $\U$-steps, we label them with the numbers $j_{i-1}+1$ through $j_{i-1}+j_i$ in reversed order. For example, the Dyck path $\U^3\D^2\U^3\D^4$ will be labeled as shown in Figure~\ref{fig:labeledDyckPath},

\begin{figure}[ht]
\begin{tikzpicture}[scale=0.5]
\dyckpath{0,0}{6}{1,1,1,0,0,1,1,1,0,0,0,0}
\begin{scope}
\foreach \l in {1,2,3} {
  \node[label,left=3pt] at (4-\l,3.95-\l) {\scriptsize \l}; 
}
\foreach \l in {4,5,6} {
  \node[label,left=3pt] at (12-\l,7.95-\l) {\scriptsize \l}; 
}
\end{scope}
\end{tikzpicture}
\caption{Labeled Dyck path.}
\label{fig:labeledDyckPath}
\end{figure}

Let $g=(g_n)_{n\in\mathbb{N}}$ be the sequence $1, 0, 0, 1, 0, 3, 7, 15, 63, 168,\dots$, that enumerates the sets of 3-connected rooted bicubic planar maps on $2n$ vertices. Following the terminology from \cite{BGMW}, in this paper we are concerned with the set $\mathfrak{D}^{\mathbf{g}}_n(3,0)$ of Dyck words of semilength $3n$ created from strings of the form 
\[ \D, \U^3\D, \U^6\D, \U^9\D, \U^{12}\D,\dots \]
such that each building block $\U^{3j}\D$ may be colored in $g_j$ different ways. Equivalently, we could say that each $\U^{3j}\D$, $j\ge 1$, may be decorated with any of the $g_j$ 3-connected rooted bicubic planar maps on $2j$ vertices. Notice that, since $g_2=g_3=g_5=0$, we have that an element of $\mathfrak{D}^{\mathbf{g}}_n(3,0)$ will never have ascents of length 6, 9, or 15.

\section{Constructing rooted bicubic planar maps}
\label{sec:construction}

We start with a gluing operation suitable for our purposes. Let $M$ and $N$ be rooted bicubic planar maps with $m$ and $n$ edges, respectively. Suppose $\ell$ is an edge in $M$ and $\varrho$ is the root edge of $N$. We let $M \merge_{\ell}\, N$ denote the rooted bicubic planar map with $m+n$ edges obtained by gluing $N$ to the edge labeled $\ell$ on $M$ as follows:
\begin{itemize}[leftmargin=20pt]
\item Choose the representaion of $N$ such that its root face is the infinite face. 
\item In $M$, mark the face to the right of $\ell$ when going from the dark to the white vertex, and place $N$ on that face such that edges $\ell$ and $\varrho$ are ``parallel'' and their corresponding vertices have opposite polarity. 
\item Cut edges $\ell$ and $\varrho$, keeping the labels on the half edges connected to the dark vertices, and connect the two maps by gluing the half edges (dark vertex of $M$ to white vertex of $N$ for the one edge, and accordingly for the other edge) as illustrated in Figure~\ref{fig:gluingOp}.
\end{itemize}

\begin{figure}[h]
\begin{tikzpicture}
\begin{scope}
\path[draw=gray,thick,opacity=0.8, pattern=north east lines, pattern color=olive] (0,0) to [bend left=40] (0,2) to [bend right=45] (-1,1) node[left=0] {\small $M$} to [bend right=45] (0,0);
\draw[poly] (0,0) -- (0,2);
\node[v1] at (0,0) {}; \node[v0] at (0,2) {};
\node[brown,right=0pt] at (0,1) {\small $\ell$};
\draw[scribble] (0.55,0.38) -- (0.55,1.62);
\draw[->] (1,1) -- (1.5,1);
\end{scope}
\begin{scope}[xshift=85]
\path[draw=gray,thick,opacity=0.8, pattern=north east lines, pattern color=olive] (0,0) to [bend left=40] (0,2) to [bend right=45] (-1,1) to [bend right=45] (0,0);
\draw[poly] plot [smooth, tension=1] coordinates {(0,0) (0.2,0.3) (0.5,0.5)};
\draw[poly] plot [smooth, tension=1] coordinates {(0,2) (0.2,1.7) (0.5,1.5)};
\node[v1] at (0,0) {}; \node[v0] at (0,2) {};
\node[brown,right=5pt] at (0,0.1) {\small $\ell$};
\end{scope}
\begin{scope}[xshift=120]
\path[draw=brown, thick, opacity=0.8, pattern=north west lines, pattern color=brown] (0,0) to [bend right=40] (0,2) to [bend left=45] (1,1) to [bend left=45] (0,0);
\draw[poly] plot [smooth, tension=1] coordinates {(0,0) (-0.2,0.3) (-0.5,0.5)};
\draw[poly] plot [smooth, tension=1] coordinates {(0,2) (-0.2,1.7) (-0.5,1.5)};
\node[v0] at (0,0) {}; \node[v1] at (0,2) {};
\node at (1,2) {\small $N$};
\node[brown,left=4pt] at (0,1.9) {\small $\varrho$};
\draw[scribble] (0,0.38) -- (0,1.62);
\draw[<-] (1.5,1) -- (2,1);
\end{scope}
\begin{scope}[xshift=200]
\path[draw=brown, thick, opacity=0.8, pattern=north west lines, pattern color=brown] (0,0) to [bend right=40] (0,2) to [bend left=45] (1,1) node[right=0] {\small $N$} to [bend left=45] (0,0);
\draw[root] (0,2) -- (0,0);
\node[v0] at (0,0) {}; \node[v1] at (0,2) {};
\node[brown,left=1pt] at (0,1) {\small $\varrho$};
\end{scope}
\node[below=8pt] at (3.5,0) {\small $M \merge_{\ell}\, N$};
\end{tikzpicture}
\caption{Gluing operation.}
\label{fig:gluingOp}
\end{figure}

The resulting map, denoted by $M \merge_{\ell}\, N$, is a rooted bicubic planar map with $m+n$ edges whose root vertex is that of $M$.

\begin{lemma} \label{lem:decompLemma}
Let $M$ be a non-primitive rooted bicubic planar map and let $e$ be the first edge that is part of a 2-edge cut set following the road construction from the labeling algorithm. Then $M$ has a 2-edge cut set $\{e,e'\}$ that decomposes $M$ into two rooted bicubic planar maps $M_1$ and $M_2$, where $M_1$ contains the root vertex of $M$ and is minimal with this property.
\end{lemma}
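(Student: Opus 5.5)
We first establish that a non-primitive bicubic planar map has a 2-edge cut, and that cutting one splits it into two pieces which can be closed up into bicubic maps. Since $M$ is cubic, 3-vertex-connectivity and 3-edge-connectivity coincide. So if $M$ is not primitive, there is a set of at most two edges whose removal disconnects $M$. A single bridge cannot occur. Removing one edge from a connected graph whose degrees are all odd would leave a component whose degree sum is odd, which is impossible. Hence $M$ has 2-edge cuts, and $e$ (the first such edge met along the road) is well defined.

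Now fix any 2-edge cut $\{e,e'\}$. The removal leaves exactly two components $A$ and $B$, and each of $e,e'$ joins $A$ to $B$. We check that they do so with opposite colour patterns. Counting edges in $A$ gives $3|A_{\rm dark}| - a_d = 3|A_{\rm white}| - a_w$, where $a_d,a_w\in\{0,1,2\}$ count the cut half-edges at dark and white vertices of $A$. Since $a_d+a_w=2$ and $a_d-a_w$ is divisible by 3, we get $a_d=a_w=1$. So $e$ meets $A$ at a dark vertex and $e'$ meets $A$ at a white vertex, or vice versa. Reconnecting the two dangling half-edges of $A$ by a new edge $\ell$ therefore keeps $A$ bipartite, and the same holds for $B$ with a new edge $\varrho$. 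Planarity is preserved because $e,e'$ lie on a common pair of faces, namely the two faces separating $A$ from $B$. The component containing the root vertex of $M$ becomes $M_1$, keeping the root of $M$ (or the new edge, if the root edge was cut). The other becomes $M_2$, rooted at $\varrho$ oriented from its dark endpoint as in the gluing operation. This reverses $M_1\merge_\ell M_2$.

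The choice of $e'$ is where the real content lies, and I expect it to be the main obstacle. Among all 2-edge cuts containing $e$, choose $e'$ so that the root-side component $A$ has the fewest vertices. Alternatively, choose the unique $e'$ for which $A$ is minimal with respect to inclusion. We must show this is well defined and that $M_1$ is minimal among all 2-edge-cut decompositions containing $e$.

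The standard tool is the uncrossing argument. Suppose $\{e,e'\}$ and $\{e,e''\}$ are both cuts, with root-side components $A'$ and $A''$. Then $A'\cap A''$ and $A'\cup A''$ both have edge boundary of size at most 2. This follows from submodularity of the cut function, $d(X\cap Y)+d(X\cup Y)\le d(X)+d(Y)=4$, together with the fact that no boundary has size less than 2 (bridges and disconnection are ruled out as above). Hence $A'\cap A''$, which contains the root vertex, is again the root side of a 2-edge cut. Because $e$ was the \emph{first} cut edge encountered on the road, and the road starts at the root vertex, the endpoint of $e$ on the root side lies in both $A'$ and $A''$. Therefore $e$ remains on the boundary of the intersection. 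So the family of root sides of cuts through $e$ is closed under intersection, and it has a unique minimal member.

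That minimal member gives $e'$ and $M_1$. What remains is to verify that the resulting pieces are genuinely rooted bicubic maps, which follows from the colour count above. It would also be worth checking whether the paper's intended notion of minimality refers to this inclusion-minimal root side.
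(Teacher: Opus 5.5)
Your proof is correct, but it finds $e'$ by a different route than the paper. The paper argues through the faces. It takes the two faces $F,F'$ on either side of $e$ and lets $C$ be the set of edges they share; any two of these form a 2-edge cut, and by planar duality every 2-edge cut through $e$ is of this form. It then takes $M'$ to be the component of $M-C$ containing the root vertex, notes that $e$ is incident to $M'$ by the choice of $e$, and lets $e'$ be the other edge of $C$ incident to $M'$.

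You instead use submodularity of the cut function to show that the root sides of 2-edge cuts through $e$ are closed under intersection. The road property guarantees that the near endpoint of $e$ lies in every such root side. So $e$ stays on the boundary of the intersection, and the far endpoint keeps the union proper. The two constructions give the same $e'$: your inclusion-minimal root side is exactly the paper's $M'$.

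The paper's route gives an explicit planar picture of all cuts through $e$, namely the cyclic chain of components of $M-C$. However, it leaves its supporting facts implicit, including why the closed-up pieces are bicubic. Your route is purely combinatorial and uses planarity only to re-embed the pieces. It also pins down what ``minimal'' means, and your dark/white count supplies the bipartiteness of $M_1$ and $M_2$ that the paper takes for granted.

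One slip needs repair: your degree-parity argument does not exclude bridges. Deleting a bridge from a cubic graph leaves a component with degree sum $3k-1$, which is even whenever $k$ is odd, and cubic graphs with bridges do exist. What excludes them here is bipartiteness, via your own count. With a single dangling half-edge you get $3(d_A-w_A)=a_d-a_w=\pm1$, where $d_A,w_A$ are the numbers of dark and white vertices in the component, and this is impossible.

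Your uncrossing step needs exactly this bound, namely $d(X)\ge 2$ for every nonempty proper vertex set $X$. The same bound shows that any $X$ with $d(X)=2$, and its complement, induce connected subgraphs. That is what makes $A'\cap A''$ genuinely the root side of a 2-edge cut.
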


\begin{proof}
Let $F$ and $F'$ be the two faces adjacent to $e$, and let $C$ be the set of edges shared by $F$ and $F'$. Note that any pair of edges in $C$ is non-adjacent and forms a 2-edge cut set for $M$. Let $M'$ be the connected component of $M-C$ that contains the root vertex, and let $M''$ be the union of the remaining components. By construction, edge $e$ connects $M'$ and $M''$. We let $e'$ be the other edge in $C$ that also connects $M'$ and $M''$.

Cut the edges $e$ and $e'$, and let $M_1$ be the rooted bicubic planar map obtained from $M'$ by gluing the half edges connected to it. Similarly, let $M_2$ be the rooted bicubic planar map obtained from the other connected component of $M-\{e,e'\}$ by gluing the remaining half edges and making this new edge the root of $M_2$.

This gives the desired decomposition of $M$. See Figure~\ref{fig:decomposition} for an illustration.
\end{proof}

\begin{figure}[ht]
\begin{tikzpicture}
\tikzstyle{shaded}=[draw=gray,thick,opacity=0.6, pattern=north east lines]
\begin{scope}
\node[regular polygon, regular polygon sides=8, minimum size=75, rotate=20] at (0,0) (A) {};
\path[shaded, pattern color=blue] (A.corner 7) to [bend right=80] (A.corner 8)  to [bend right=80]  (A.corner 7);
\path[shaded, pattern color=brown] (A.corner 1) to [bend right=70] (A.corner 2)  to [bend right=70]  (A.corner 1);
\path[shaded, pattern color=brown] (A.corner 3) to [bend right=70] (A.corner 4)  to [bend right=70]  (A.corner 3);
\path[shaded, pattern color=brown] (A.corner 5) to [bend right=70] (A.corner 6)  to [bend right=70]  (A.corner 5);
\node[below right=2pt] at (A.corner 6) {\small $M$};
\node at (1.17,0.5) {\scriptsize $M'$};
\draw[poly] (A.corner 8) -- (A.corner 1) node[black, below right=4pt]{\small $e$};
\draw[poly] (A.corner 2) -- (A.corner 3);
\draw[poly] (A.corner 6)  -- (A.corner 7) node[black, below left=3pt]{\small $e'$};
\draw[poly,dotted] (A.corner 4) -- (A.corner 5);
\foreach \i in {1,3,5,7}{
	\node[v0] at (A.corner \i) {};
}
\foreach \i in {2,4,6,8}{
	\node[v1] at (A.corner \i) {};
}
\end{scope}
\node at (2.25,0) {$\leadsto$};
\node at (6.7,0) {$\leadsto$};

\begin{scope}[xshift=125]
\node[regular polygon, regular polygon sides=8, minimum size=75, rotate=20] at (0,0) (A) {};
\path[shaded, pattern color=blue] (A.corner 7) to [bend right=80] (A.corner 8)  to [bend right=80]  (A.corner 7);
\path[shaded, pattern color=brown] (A.corner 1) to [bend right=70] (A.corner 2)  to [bend right=70]  (A.corner 1);
\path[shaded, pattern color=brown] (A.corner 3) to [bend right=70] (A.corner 4)  to [bend right=70]  (A.corner 3);
\path[shaded, pattern color=brown] (A.corner 5) to [bend right=70] (A.corner 6)  to [bend right=70]  (A.corner 5);
\node at (1.17,0.5) {\scriptsize $M'$};
\draw[poly,blue!40] (A.corner 8)  to[bend left=20] ++(-13pt,9pt);
\draw[poly,brown!70] (A.corner 1)  to[bend left=20] ++(13pt,-10pt);
\draw[poly,brown!70] (A.corner 6)  to[bend right=20] ++(4pt,15pt);
\draw[poly,blue!40] (A.corner 7)  to[bend right=20] ++(-4pt,-15pt);
\draw[poly] (A.corner 2) -- (A.corner 3);
\draw[poly,dotted] (A.corner 4) -- (A.corner 5);
\foreach \i in {1,3,5,7}{
	\node[v0] at (A.corner \i) {};
}
\foreach \i in {2,4,6,8}{
	\node[v1] at (A.corner \i) {};
}
\end{scope}
\begin{scope}[xshift=240,yshift=-8]
\node[regular polygon, regular polygon sides=6, minimum size=60, rotate=30] at (0,0) (A) {};
\node[below right=1pt] at (A.corner 5) {\small $M_2$};
\path[shaded, pattern color=brown] (A.corner 1) to [bend right=70] (A.corner 2)  to [bend right=70]  (A.corner 1);
\path[shaded, pattern color=brown] (A.corner 3) to [bend right=70] (A.corner 4)  to [bend right=70]  (A.corner 3);
\path[shaded, pattern color=brown] (A.corner 5) to [bend right=70] (A.corner 6)  to [bend right=70]  (A.corner 5);
\draw[root,brown!70] (A.corner 6) -- (A.corner 1);
\draw[poly] (A.corner 2) -- (A.corner 3);
\draw[poly,dotted] (A.corner 4) -- (A.corner 5);
\foreach \i in {1,3,5}{
	\node[v0] at (A.corner \i) {};
}
\foreach \i in {2,4,6}{
	\node[v1] at (A.corner \i) {};
}
\end{scope}

\begin{scope}[xshift=302,yshift=8,rotate=20]
\path[shaded, pattern color=blue] (0,0) to [bend right=80] (0,1.2)  to [bend right=80]  (0,0);
\draw[poly,blue!40] plot [smooth, tension=1.8] coordinates {(0,0) (-0.6,0.6) (0,1.2)};
\node at (0.75,0.6) {\small $M_1$};
\node[v0] at (0,0) {};
\node[v1] at (0,1.2) {};
\end{scope}

\end{tikzpicture}
\caption{Sketch for the proof of Lemma~\ref{lem:decompLemma}.}
\label{fig:decomposition}
\end{figure}

\begin{theorem} \label{thm:mainBijection}
There is a bijection between the set of rooted bicubic maps on $2n$ vertices and the set of Dyck paths of semilength $3n$ having ascents of length multiple of 3, and such that each $3j$-ascent may be colored in $g_j$ different ways. 
\end{theorem}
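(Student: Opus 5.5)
I will build the bijection recursively, in both directions, using the gluing operation $\merge$ and Lemma~\ref{lem:decompLemma}. The key structural fact is the one behind Tutte's equation \eqref{eq:cubicMaps}. Every rooted bicubic map $M$ on $2n$ vertices has a unique primitive \emph{core} $P$ on $2j$ vertices, namely the 3-connected component containing the root edge. $M$ is obtained from $P$ by gluing, at each of the $3j$ edges of $P$ (labeled $1,\dots,3j$ by the labeling algorithm), either nothing or a rooted bicubic map. This matches $x(1+F)^3$ per vertex pair, i.e.\ $(1+F)^{3j}$ per core.

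On the Dyck side, I will use the standard first-ascent decomposition. A path $w\in\mathfrak{D}^{\mathbf g}_n(3,0)$ begins with an ascent $\U^{3j}$ of color $P$ (one of the $g_j$ primitives on $2j$ vertices). Its $\U$-steps carry labels $1,\dots,3j$ in reversed order. After the descent step leaving the ascent, the path splits uniquely as
\[
w=\U^{3j}\,\D\, w_{3j}\,\D\, w_{3j-1}\cdots \D\, w_1,
\]
where each $w_i$ is a (possibly empty) path in $\mathfrak{D}^{\mathbf g}(3,0)$. The $w_i$ are read off the return points to successive lower heights, and the labels match the $\U$-steps of the first ascent. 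I will define $\Phi(w)$ recursively as
\[
\Phi(w) = \big(\cdots\big((P \merge_{1} \Phi(w_1)) \merge_2 \Phi(w_2)\big)\cdots\big) \merge_{3j}\Phi(w_{3j}),
\]
skipping empty $w_i$. The operation $\merge$ keeps the labels on the half edges at dark vertices, so every label $\ell$ of $P$ still names a well-defined edge after earlier gluings, and the order of gluing is immaterial. Vertex counts add: $2j+\sum 2n_i=2n$ exactly when the semilengths satisfy $3j+\sum 3n_i=3n$.

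For the inverse, given $M$, I will iterate Lemma~\ref{lem:decompLemma}, always splitting off the 2-edge cut found first along the road, until the piece containing the root is primitive. This piece is the core $P$. Each split-off $M_2$ is recorded together with the label of the edge of $P$ it was attached to, using the dark-vertex half-edge convention. Pieces attached to the same edge of $P$ are nested: a second glue on edge $\ell$ lands inside the previously glued map. So I must group everything hanging on edge $\ell$ into a single rooted map $N_\ell$, and I will define the inverse as $\Psi(M)=\U^{3j}\D\,\Psi(N_{3j})\D\cdots\D\,\Psi(N_1)$.

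The main obstacle is proving that this decomposition is well defined and inverse to $\Phi$. First, the core must be intrinsic, i.e.\ independent of the order in which cuts are taken; I will argue this via the uniqueness of 3-connected components containing the root edge, with the minimality in Lemma~\ref{lem:decompLemma} guaranteeing that we never cut through the core. Second, all material glued into the face to the right of edge $\ell$ must assemble into exactly one rooted map $N_\ell$, whose root is the edge formed by rejoining the half edges at the white ends. This requires showing that the 2-edge cuts across a given edge of the core are linearly nested, which follows from the face structure $C=F\cap F'$ used in the proof of Lemma~\ref{lem:decompLemma}. Third, the relabeling convention must make the root and polarity of $N_\ell$ canonical, so that $\Psi\circ\Phi$ and $\Phi\circ\Psi$ are both identities. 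With these facts established, induction on $n$ completes the bijection.
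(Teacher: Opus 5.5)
Your proposal is correct in outline, and it takes a genuinely different route from the paper.

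\textbf{How the two routes differ.} You pair the first-ascent split $w=\U^{3j}\D w_{3j}\cdots\D w_1$ with the core decomposition of a map, and you define $\Phi$ recursively by substituting the \emph{whole} map $\Phi(w_i)$ into an edge of the core. You then prove bijectivity by exhibiting the inverse. The paper instead builds $\phi$ by merging primitives one at a time, peak by peak, at the label $e_i$ matched by the last $\D$ of each descent. It proves only surjectivity (strong induction via Lemma~\ref{lem:decompLemma} and the insertion $P_1\circ_a P_2$), and gets injectivity from the equinumerosity supplied by Tutte's equation and the Bell-transform result. That is shorter, but it leans on Tutte's enumeration. Your route, once completed, is self-contained and proves $F=G(x(1+F)^3)$ bijectively, because the same first-ascent split gives $D=G(x(1+D)^3)$ for decorated paths.

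\textbf{What you still owe.} The price is that the uniqueness facts you defer are the real content. They are easier than you suggest.
\begin{itemize}
\item Take the core to be the class of the root \emph{vertex} under ``not separated by any 2-edge cut''. Do not use the root edge, which may join two blocks, as in $\Theta\merge_1\Theta$.
\item Suppose one side of a 2-edge cut has $a$ dark and $b$ white vertices, and $c_d$, $c_w$ cut edges at dark and white vertices. Then $3a-c_d=3b-c_w$ and $c_d+c_w=2$ force $c_d=c_w=1$, so closing up respects the bipartition.
\item The road meets no non-core vertex before its first cut edge. Hence $M'$ in Lemma~\ref{lem:decompLemma} is the core with its other attachments. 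Each iteration therefore peels off an entire far side $N_\ell$, rooted at the dark vertex adjacent to the white end of $\ell$, which is exactly what undoes $\merge_\ell$. No regrouping of nested pieces is needed.
\item Minor: with the paper's reversed labels, the subpath after the $k$-th $\D$ belongs to label $k$, so your $w_i$ sits on edge $3j+1-i$. Any fixed matching of slots to edges works.
\end{itemize}

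\textbf{What your recursion buys.} The paper's $\phi$ is not recursive in your sense. In sequential merging, a primitive glued at the root label of an earlier primitive $B$ lands between $B$ and the \emph{white} end of the host edge. So along each chain of blocks joined by 2-edge cuts, $\phi$ and $\Phi$ read the blocks in opposite orders, and they are different bijections. Consequently the identity $\phi(P_1\circ_aP_2)=\phi(P_1)\merge_a\phi(P_2)$, used in the paper's surjectivity step, fails when the root edge of $\phi(P_2)$ lies in a 2-edge cut. Take $P_1=\U^3\D^3$, $a=3$, $P_2=\U^3\D\U^3\D^2\U^3\D^6$. In $\phi(\U^3\D^3P_2)$, the neighbour of the root vertex along edge $3$ is joined to its block partner by a double edge. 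In $\phi(P_1)\merge_3\phi(P_2)$, that neighbour has three distinct neighbours. In your construction the corresponding compatibility holds by definition.
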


\begin{proof}
The idea of the proof is that every rooted bicubic planar map on $2n$ vertices can be uniquely constructed from 3-connected rooted bicubic planar maps (primitives) that are ``stored'' on the ascents of a Dyck path of semilength $3n$.

If $P$ is a Dyck path of the form $\U^{3n}\D^{3n}$, decorated by the primitive map $B$, then we let $\phi(P)=B$.
Suppose now that $P$ is a Dyck path in $\mathfrak{D}^{\mathbf{g}}_n(3,0)$ of the form
\[ \U^{j_1}\D^{\ell_1}\cdots \U^{j_k}\D^{\ell_k} \]
with $j_i\ge 1$, $k>1$, and ascents decorated by primitive maps $B_1,\dots,B_k$ with $j_1,\dots,j_k$ edges, respectively. We then construct a planar map $M_P=\phi(P)$ as follows:

\begin{itemize}[leftmargin=20pt]
\item Use the algorithms from Section~\ref{sec:prelim} to label the primitive maps and the ascents of $P$.
\item For the maximal descent following the $i$th peak of $P$, let $e_i$ be the label of the $\U$-step matching the final $\D$-step of that descent.
\item Merge the primitive maps $B_1,\dots,B_k$ by letting $M_1=B_1$, and iteratively,
\begin{equation*}
M_{j+1} = M_{j} \merge_{e_j}\, B_{j+1} \;\text{ for } j\in\{1,\dots,k-1\}.
\end{equation*}
Before each merge, increment the edge labels of $B_{j+1}$ by the number of edges in $M_j$.
\end{itemize}
We let $M_P=M_{k}$. It is a rooted bicubic planar map with $j_1+\dots+j_k$ edges.

For example, if $P$ is the decorated Dyck path in Figure~\ref{fig:3ThetaDyckPath}, where $\Theta=$~\tikz[scale=0.55,baseline=-3pt]{
\draw[poly] (0,0) .. controls +(45:23pt) and +(135:23pt) .. (2,0) -- (0,0);
\draw[root] (0,0) .. controls +(-45:23pt) and +(-135:23pt) .. (2,0);
\node[v1] at (0,0) {}; \node[v0] at (2,0) {};}\,, then we get $M_1 = \Theta\merge_{\bf 2}\,\Theta$ and $M_P = M_2 = M_1\merge_{\bf 4}\,\Theta$. See Figures \ref{fig:Theta*Theta} and \ref{fig:M1*Theta}. A much larger example with ascents longer than 3 is provided in the appendix.

\begin{figure}[h]
\begin{tikzpicture}[scale=0.5]
\dyckpath{0,0}{9}{1,1,1,0,0,1,1,1,0,1,1,1,0,0,0,0,0,0}
\begin{scope}
\draw[thick,dashed,cyan] (1.6,1.4) -- (4.5,1.4);
\draw[thick,dashed,cyan] (7.5,3.4) -- (8.5,3.4);
\foreach \l in {1,2,3} { \node[label,left=3pt] at (4-\l,3.95-\l) {\scriptsize \l}; }
\foreach \l in {4,5,6} { \node[label,left=3pt] at (12-\l,7.95-\l) {\scriptsize \l}; }
\foreach \l in {7,8,9} { \node[label,left=3pt] at (19-\l,12.95-\l) {\scriptsize \l}; }
\foreach \x/\y in {2/2,7/3,11/5} {\node[left=12pt] at (\x,\y) {$\Theta$};}
\end{scope}
\end{tikzpicture}
\caption{Decorated Dyck path $P$.}
\label{fig:3ThetaDyckPath}
\end{figure}

\begin{figure}[ht]
\begin{tikzpicture}[scale=0.75]
\begin{scope}
\draw[poly] (0,0) .. controls +(45:23pt) and +(135:23pt) .. (2,0);
\draw[poly] (0,0) .. controls +(-45:23pt) and +(-135:23pt) .. (2,0);
\draw[root] (0,0) -- (2,0);
\node[v1] at (0,0) {}; \node[v0] at (2,0) {};
\node[label,below=0pt] at (1,0) {\tiny $2$};
\node[label,above=8pt] at (1,0) {\tiny $3$};
\node[label,above=-2pt] at (1,0) {\tiny $1$};
\draw[scribble] (0.4,-0.75) -- (1.6,-0.75);
\draw[->] (1,-1) -- (1.5,-1.5);
\node[above=5pt] at (2,0) {\small $\Theta$};
\end{scope}
\begin{scope}[xshift=120]
\draw[poly] (2,0) .. controls +(135:23pt) and +(45:23pt) .. (0,0);
\draw[root] (0,0) .. controls +(-45:23pt) and +(-135:23pt) .. (2,0);
\draw[poly] (0,0) -- (2,0);
\node[v1] at (0,0) {}; \node[v0] at (2,0) {};
\node[label,below=0pt] at (1,0) {\tiny $4$};
\node[label,above=8pt] at (1,0) {\tiny $5$};
\node[label,above=-2pt] at (1,0) {\tiny $6$};
\draw[->] (1,-1) -- (0.5,-1.5);
\node[above=5pt] at (2,0) {\small $\Theta$};
\end{scope}
\begin{scope}[xshift=215]
\draw[poly] (0,0) .. controls +(45:23pt) and +(135:23pt) .. (2,0);
\draw[root] (0,0) .. controls +(-45:23pt) and +(-135:23pt) .. (2,0);
\draw[poly] (0,0) -- (2,0);
\node[v1] at (0,0) {}; \node[v0] at (2,0) {};
\node[label,below=0pt] at (1,0) {\tiny $1$};
\node[label,above=8pt] at (1,0) {\tiny $2$};
\node[label,above=-2pt] at (1,0) {\tiny $3$};
\draw[->] (-0.4,0) -- (-0.8,0);
\node[above=5pt] at (2,0) {\small $\Theta$};
\end{scope}
\begin{scope}[xshift=60,yshift=-100]
\draw[poly] (0,0) .. controls +(135:23pt) and +(-135:23pt) .. (0,2);
\draw[poly] (0.6,0) -- (0,0) -- (0,2) -- (0.6,2);
\draw[root] (0,0) -- (0,2);
\draw[scribble] (0.7,0.4) -- (0.7,1.6);
\node[v1] at (0,0) {}; \node[v0] at (0,2) {};
\node[label,below=-2pt] at (0.3,0) {\tiny $2$};
\node[label,left=8pt] at (0,1) {\tiny $3$};
\node[label,left=-2pt] at (0,1) {\tiny $1$};
\end{scope}
\begin{scope}[xshift=115,yshift=-100]
\draw[poly] (-0.6,0) -- (0,0) -- (0,2) -- (-0.6,2);
\draw[poly] (0,0) .. controls +(45:23pt) and +(-45:23pt) .. (0,2);
\draw[poly] (0,0) -- (0,2);
\node[v0] at (0,0) {}; \node[v1] at (0,2) {};
\node[label,right=8pt] at (0,1) {\tiny $5$};
\node[label,right=-2pt] at (0,1) {\tiny $6$};
\node[label,above=-2pt] at (-0.3,2) {\tiny $4$};
\end{scope}
\begin{scope}[xshift=200,yshift=-100]
\draw[->] (-1.7,1) -- (-1,1);
\draw[poly] (2,0) -- (0,0) .. controls +(135:15pt) and +(-135:15pt) .. (0,2) -- (2,2);
\draw[root] (0,0) .. controls +(45:15pt) and +(-45:15pt) .. (0,2);
\draw[poly] (2,0) .. controls +(45:15pt) and +(-45:15pt) .. (2,2);
\draw[poly] (2,0) .. controls +(135:15pt) and +(-135:15pt) .. (2,2);
\node[v1] at (0,0) {}; \node[v0] at (0,2) {};
\node[v0] at (2,0) {}; \node[v1] at (2,2) {};
\node[label,below=-2pt] at (1,0) {\tiny $2$};
\node[label,left=5pt] at (0,1) {\tiny $3$};
\node[label,right=-3pt] at (0,1) {\tiny $1$};
\node[label,right=5pt] at (2,1) {\tiny $5$};
\node[label,left=-3pt] at (2,1) {\tiny $6$};
\node[label,above=-2pt] at (1,2) {\tiny $4$};
\node[left=12pt] at (4.2,1) {\small $M_1$};
\end{scope}
\end{tikzpicture}
\caption{Construction of $M_1 = \Theta\merge_{\bf 2}\,\Theta$.}
\label{fig:Theta*Theta}
\end{figure}

\begin{figure}[ht]
\begin{tikzpicture}[scale=0.7]
\begin{scope}
\draw[poly] (0,2) -- (0,0) .. controls +(45:15pt) and +(135:15pt) .. (2,0) -- (2,2);
\draw[poly] (0,0) .. controls +(-45:15pt) and +(-135:15pt) .. (2,0);
\draw[poly] (0,2) .. controls +(45:15pt) and +(135:15pt) .. (2,2);
\draw[root] (0,2) .. controls +(-45:15pt) and +(-135:15pt) .. (2,2);
\node[v0] at (0,0) {}; \node[v1] at (2,0) {};
\node[v1] at (0,2) {}; \node[v0] at (2,2) {};
\node[label,left=-2pt] at (0,1) {\tiny $2$};
\node[label,above=5pt] at (1,2) {\tiny $3$};
\node[label,above=-8pt] at (1,2) {\tiny $1$};
\node[label,right=-2pt] at (2,1) {\tiny $4$};
\node[label,above=5pt] at (1,0) {\tiny $6$};
\node[label,above=3pt] at (1,-0.5) {\tiny $5$};
\draw[scribble] (2.5,0.4) -- (2.5,1.7);
\draw[->] (1,-0.5) -- (1.5,-1);
\node[left=10pt] at (0,1) {\small $M_1$};
\end{scope}
\begin{scope}[xshift=150, yshift=20]
\draw[root] (2,0) .. controls +(135:23pt) and +(45:23pt) .. (0,0);
\draw[poly] (0,0) .. controls +(-45:23pt) and +(-135:23pt) .. (2,0);
\draw[poly] (0,0) -- (2,0);
\node[v0] at (0,0) {}; \node[v1] at (2,0) {};
\node[label,below=0pt] at (1,0) {\tiny $8$};
\node[label,above=8pt] at (1,0) {\tiny $7$};
\node[label,above=-2pt] at (1,0) {\tiny $9$};
\draw[->] (1,-1) -- (0.5,-1.5);
\node[above=5pt] at (2,0) {\small $\Theta$};
\end{scope}
\begin{scope}[xshift=40,yshift=-110]
\draw[poly] (0,2) -- (0,0) .. controls +(45:15pt) and +(135:15pt) .. (2,0) -- (2.6,0);
\draw[poly] (0,0) .. controls +(-45:15pt) and +(-135:15pt) .. (2,0);
\draw[poly] (0,2) .. controls +(45:15pt) and +(135:15pt) .. (2,2) -- (2.6,2);
\draw[root] (0,2) .. controls +(-45:15pt) and +(-135:15pt) .. (2,2);
\node[v0] at (0,0) {}; \node[v1] at (2,0) {};
\node[v1] at (0,2) {}; \node[v0] at (2,2) {};
\node[label,left=-2pt] at (0,1) {\tiny $2$};
\node[label,above=5pt] at (1,2) {\tiny $3$};
\node[label,above=-8pt] at (1,2) {\tiny $1$};
\node[label,right=-2pt] at (2,1) {\tiny $4$};
\node[label,above=5pt] at (1,0) {\tiny $6$};
\node[label,above=3pt] at (1,-0.5) {\tiny $5$};
\end{scope}
\begin{scope}[xshift=155,yshift=-110]
\draw[poly] (-0.6,0) -- (0,0) -- (0,2) -- (-0.6,2);
\draw[poly] (0,0) .. controls +(45:23pt) and +(-45:23pt) .. (0,2);
\draw[poly] (0,0) -- (0,2);
\node[v0] at (0,0) {}; \node[v1] at (0,2) {};
\node[label,right=8pt] at (0,1) {\tiny $8$};
\node[label,right=-2pt] at (0,1) {\tiny $9$};
\node[label,above=-2pt] at (-0.3,2) {\tiny $7$};
\end{scope}
\begin{scope}[xshift=315,yshift=-80]
\draw[->] (-4,0) -- (-3,0);
\node[regular polygon, regular polygon sides=6, minimum size=80] at (0,0) (A) {};
\draw[poly] (A.corner 3) .. controls +(105:15pt) and +(195:15pt) .. (A.corner 2);
\draw[root] (A.corner 3) .. controls +(15:15pt) and +(-75:15pt) .. (A.corner 2);
\draw[poly] (A.corner 4) .. controls +(45:15pt) and +(135:15pt) .. (A.corner 5);
\draw[poly] (A.corner 4) .. controls +(-45:15pt) and +(-135:15pt) .. (A.corner 5);
\draw[poly] (A.corner 6) .. controls +(75:15pt) and +(-15:15pt) .. (A.corner 1);
\draw[poly] (A.corner 6) .. controls +(165:15pt) and +(-105:15pt) .. (A.corner 1);
\foreach \i/\j in {1/2,3/4,5/6}{
	\draw[poly] (A.corner \i) -- (A.corner \j);
}
\foreach \i in {1,3,5}{
	\node[v1] at (A.corner \i) {};
}
\foreach \i in {2,4,6}{
	\node[v0] at (A.corner \i) {};
}
\node[left=20pt] at (4.3,1) {\small $M_P$};
\end{scope}
\end{tikzpicture}
\caption{Construction of $M_P = M_1\merge_{\bf 4}\,\Theta$.}
\label{fig:M1*Theta}
\end{figure}

As mentioned in the introduction, we already know that the two sets in question are equinumerous. Thus, it is enough to prove that our map $\phi:P\mapsto M_P$ is surjective. We will do that by strong induction in $n$.

Note that the map $\Theta$ corresponds to the path $\U^3\D^3$, with ascent decorated by $\Theta$, and the three rooted bicubic planar maps on 4 vertices (shown in Figure~\ref{fig:4vertices}) correspond (from left to right) to the Dyck paths $\U^3\D\U^3\D^5$, $\U^3\D^2\U^3\D^4$, and $\U^3\D^3\U^3\D^3$, with $\Theta$-decorated ascents. In other words, these maps can be written as $\Theta\merge_{\bf 1}\,\Theta$, $\Theta\merge_{\bf 2}\,\Theta$, and $\Theta\merge_{\bf 3}\,\Theta$, respectively.

This takes care of the cases $n=1$ and $n=2$. Suppose now $M$ is a rooted bicubic planar map on $2n$ vertices with $n>2$. If $M$ is primitive, we let $P_M$ be the Dyck path $\U^{3n}\D^{3n}$ with $M$-decorated ascent. By definition, $\phi(P_M)=M$.

If $M$ is not primitive, we use Lemma~\ref{lem:decompLemma} to decompose $M$ into two smaller rooted bicubic planar maps $M_1$ and $M_2$, where $M_1$ has the root of $M$ and a distinguished edge created by the gluing of the two half edges from the cut set $\{e,e'\}$. By induction, there are Dyck paths $P_1$ and $P_2$ such that $\phi(P_i)=M_i$, $i=1,2$. Using the labeling induced by the map $\phi$, we let $a$ be the label of the distinguished edge in $M_1$. By the construction given in Lemma~\ref{lem:decompLemma}, label $a$ must be on the first ascent of $P_1$. 
Let $P$ be the Dyck path $P_1 \circ_a P_2$, obtained by inserting $P_2$ right after the down-step corresponding to the up-step labeled $a$. 

Let $3m$ be the length of the first ascent of $P_1$. If $a=3m$, then $P_1$ must be of the form $\U^{3m}\D^{3m}$, so we have $\phi(P) =  \phi(P_1) \merge_{a}\, \phi(P_2)$.

If $a<3m$, then from the $a$-th down-step in $P_1$, we go down until the last down-step of that descent is reached. Let $b$ be the label of the corresponding up-step. By construction, we have $a<b\le 3m$. Thus there are Dyck paths $P_1'$ and $P_1''$ (possibly the empty path) such that $P_1 = P_1'\circ_b P_1''$. 
\begin{figure}[t]
\begin{tikzpicture}[scale=0.45]
\draw[help lines] (0,0) -- (22,0);
\node[label,left=0pt] at (3,3.3) {\small $a$};
\node[label,left=0pt] at (1,1.3) {\small $b$};
\draw[help lines,brown] (3.2,3.2) -- (6.8,3.2);
\draw[help lines,brown] (1.2,1.2) -- (16.8,1.2);
\draw[thick] (0,0) -- (5,5) -- (7,3);
\node[right=9pt] at (5,4.5) {\small $P_1'$};
\draw[thick,dotted,gray!50] (7,3) -- (10,0);
\draw[thick,blue,fill=blue!15] (7,3) -- (10,6) -- (11,5) -- (12,6) -- (15,3);
\draw[dotted,blue] (7,3) -- (15,3);
\node[blue,right=4pt] at (13,5) {\small $P_2$};
\draw[thick] (15,3) -- (17,1);
\draw[thick,fill=gray!25] (17,1) -- (19,3) -- (21,1);
\draw[dotted] (17,1) -- (21,1);
\node[right=10pt] at (19,2.5) {\small $P_1''$};
\draw[thick] (21,1) -- (22,0);
\end{tikzpicture}
\caption{Example of $(P_1'\circ_b P_1'') \circ_a P_2 = (P_1'\circ_a P_2) \circ_b P_1''$.}
\label{fig:DyckPathPartition}
\end{figure}
In this case, we have
\[ P = P_1 \circ_a P_2 = (P_1'\circ_b P_1'') \circ_a P_2 = (P_1'\circ_a P_2) \circ_b P_1'' 
  \quad\text{(see Figure~\ref{fig:DyckPathPartition}),}\]
and threfore, $\phi(P) = \phi((P_1'\circ_a P_2) \circ_b P_1'') = \phi((P_1'\circ_b P_1'') \circ_a P_2) = \phi(P_1)\merge_{a}\, \phi(P_2)$. 

In all cases, $\phi(P) = M_1\merge_{a}\, M_2 = M$, proving the surjectivity of $\phi$.
\end{proof}

Using the above construction, all 12 rooted bicubic planar maps on $6$ vertices can be generated from Dyck paths of the form $\U^3\D^i\U^3\D^j\U^3\D^k$ with $i,j,k\ge 1$ and $\Theta$-decorated ascents. They are listed in Table~\ref{tab:library9edges} with corresponding maps shown in Figure~\ref{fig:library9edges}, both in the appendix.

\section{Primitive elements}
\label{sec:primitives}

In this section, we will refer to 3-connected bicubic planar maps as {\em primitives}.

Let us recall that $\Theta=$~\tikz[scale=0.55,baseline=-3pt]{
\draw[poly] (0,0) .. controls +(45:23pt) and +(135:23pt) .. (2,0) -- (0,0);
\draw[poly] (0,0) .. controls +(-45:23pt) and +(-135:23pt) .. (2,0);
\node[v1] at (0,0) {}; \node[v0] at (2,0) {};}\, is the smallest primitive. Any primitive consisting of two $n$-faces and $n$ edges between these two faces will be referred to as an {\em $n$-prism}. A 4-prism is often called a {\em cube}. The cube can be obtained from $\Theta$ by a suitable insertion of 6 vertices and, in turn, the cube gives rise to the 6-prism by a 4-vertex insertion as shown in Figure~\ref{fig:8-12-vertices}.

\begin{figure}[ht]
\begin{tikzpicture}[scale=0.8]
\begin{scope}
\draw[poly] (0,0) .. controls +(45:23pt) and +(135:23pt) .. (2,0);
\draw[poly] (0,0) .. controls +(-45:23pt) and +(-135:23pt) .. (2,0);
\draw[poly] (0,0) -- (2,0);
\node[v1] at (0,0) {}; \node[v0] at (2,0) {};
\draw[->] (2.5,0) -- (3,0);
\end{scope}
\begin{scope}[xshift=150]
\node[regular polygon, regular polygon sides=6, minimum size=30, poly,insertcol] at (0,0) (A) {};
\draw[poly] (-1.7,0) .. controls +(80:20pt) and +(130:20pt) .. (A.corner 1);
\draw[poly] (-1.7,0) .. controls +(-80:20pt) and +(-130:20pt) .. (A.corner 5);
\draw[poly] (-1.7,0) node[v1]{} -- (A.corner 3);
\foreach \i in {2,4,6}{
	\draw[poly,insertcol] (0,0) -- (A.corner \i);
}
\node[v0] at (0,0){};
\foreach \i in {1,3,5}{
	\node[v0] at (A.corner \i) {};
}
\foreach \i in {2,4,6}{
	\node[v1] at (A.corner \i) {};
}
\node at (1.25,0) {$\cong$};
\end{scope}
\begin{scope}[xshift=240]
\node[regular polygon, regular polygon sides=4, minimum size=25, poly, rotate=45] at (0,0) (A) {};
\draw[poly] (-1.3,0) .. controls +(80:42pt) and +(100:42pt) .. (1.3,0);
\draw[poly] (-1.3,0) .. controls +(-80:42pt) and +(-100:42pt) .. (1.3,0);
\draw[poly] (A.corner 1) -- (0,1.1) node[v0]{};
\draw[poly] (A.corner 2) -- (-1.3,0) node[v1]{};
\draw[poly] (A.corner 3) -- (0,-1.1) node[v0]{};
\draw[poly] (A.corner 4) -- (1.3,0) node[v1]{};
\foreach \i in {1,3}{
	\node[v1] at (A.corner \i) {};
}
\foreach \i in {2,4}{
	\node[v0] at (A.corner \i) {};
}
\end{scope}
\end{tikzpicture}

\medskip
\scalebox{0.9}{
\begin{tikzpicture}
\begin{scope}
\node[regular polygon, regular polygon sides=4, minimum size=36, poly] at (0,0) (A) {};
\node[regular polygon, regular polygon sides=4, minimum size=80, poly] at (0,0) (B) {};
\foreach \i in {1,...,4} {\draw[poly] (A.corner \i) -- (B.corner \i);}
\foreach \i in {1,3}{
	\node[v0] at (A.corner \i) {};
	\node[v1] at (B.corner \i) {};
}
\foreach \i in {2,4}{
	\node[v1] at (A.corner \i) {};
	\node[v0] at (B.corner \i) {};
}
\draw[->] (1.75,0) -- (2.25,0);
\end{scope}
\begin{scope}[xshift=115]
\node[regular polygon, regular polygon sides=4, minimum size=38, poly] at (0,0) (A) {};
\node[regular polygon, regular polygon sides=4, minimum size=86, poly] at (0,0) (B) {};
\foreach \i in {1,...,4} {\draw[poly] (A.corner \i) -- (B.corner \i);}
\foreach \i in {1,3}{
	\node[v0] at (A.corner \i) {};
	\node[v1] at (B.corner \i) {};
}
\foreach \i in {2,4}{
	\node[v1] at (A.corner \i) {};
	\node[v0] at (B.corner \i) {};
}
\draw[poly,insertcol] (A.corner 1)+(-0.35,0) node[v1]{} -- +(-0.35,0.63) node[v0]{};
\draw[poly,insertcol] (A.corner 1)+(-0.65,0) node[v0]{} -- +(-0.65,0.63) node[v1]{};
\node at (1.8,0) {$\cong$};
\end{scope}
\begin{scope}[xshift=220]
\node[regular polygon, regular polygon sides=6, minimum size=36, poly] at (0,0) (A) {};
\node[regular polygon, regular polygon sides=6, minimum size=74, poly] at (0,0) (B) {};
\foreach \i in {1,...,6} {\draw[poly] (A.corner \i) -- (B.corner \i);}
\foreach \i in {1,3,5}{
	\node[v1] at (A.corner \i) {};
	\node[v0] at (B.corner \i) {};
}
\foreach \i in {2,4,6}{
	\node[v0] at (A.corner \i) {};
	\node[v1] at (B.corner \i) {};
}
\end{scope}
\end{tikzpicture}
}
\caption{A $6$-vertex (top) and a $4$-vertex (bottom) insertion.}
\label{fig:8-12-vertices}
\end{figure}
In fact, similar insertions may be used to generate all primitives on 14, 16 , and 18 vertices as shown in Figure~\ref{fig:14vertices}, Figure~\ref{fig:16vertices}, and Figure~\ref{fig:18vertices}, respectively. Observe that to obtain all primitives on 18 vertices, we used both types of insertion.

\begin{figure}[ht]
\scalebox{0.9}{
\begin{tikzpicture}
\begin{scope}
\node[regular polygon, regular polygon sides=4, minimum size=32, poly, rotate=45] at (0,0) (A) {};
\draw[poly] (-1.3,0) .. controls +(80:42pt) and +(100:42pt) .. (1.3,0);
\draw[poly] (-1.3,0) .. controls +(-80:42pt) and +(-100:42pt) .. (1.3,0);
\draw[poly] (A.corner 1) -- (0,1.1) node[v0]{};
\draw[poly] (A.corner 2) -- (-1.3,0) node[v1]{};
\draw[poly] (A.corner 3) -- (0,-1.1) node[v0]{};
\draw[poly] (A.corner 4) -- (1.3,0) node[v1]{};
\foreach \i in {1,3}{
	\node[v1] at (A.corner \i) {};
}
\foreach \i in {2,4}{
	\node[v0] at (A.corner \i) {};
}
\draw[->] (1.8,0) -- (2.3,0);
\end{scope}
\begin{scope}[xshift=118]
\node[regular polygon, regular polygon sides=4, minimum size=32, poly, rotate=45] at (0,0) (A) {};
\node[regular polygon, regular polygon sides=6, minimum size=30, poly,insertcol] at (1.5,0) (B) {};
\draw[poly] (-1.3,0) .. controls +(80:42pt) and +(120:28pt) .. (B.corner 1);
\draw[poly] (-1.3,0) .. controls +(-80:42pt) and +(-120:28pt) .. (B.corner 5);
\draw[poly] (A.corner 1) -- (0,1.1) node[v0]{};
\draw[poly] (A.corner 2) -- (-1.3,0) node[v1]{};
\draw[poly] (A.corner 3) -- (0,-1.1) node[v0]{};
\draw[poly] (A.corner 4) -- (B.corner 3);
\foreach \i in {1,3}{
	\node[v1] at (A.corner \i) {};
}
\foreach \i in {2,4}{
	\node[v0] at (A.corner \i) {};
}
\foreach \i in {2,4,6}{
	\draw[poly,insertcol] (1.5,0) -- (B.corner \i);
}
\node[v1] at (1.5,0){};
\foreach \i in {1,3,5}{
	\node[v1] at (B.corner \i) {};
}
\foreach \i in {2,4,6}{
	\node[v0] at (B.corner \i) {};
}
\node at (2.65,0) {$\cong$};
\end{scope}
\begin{scope}[xshift=215]
\node[shape=rounded rectangle, inner xsep=58, inner ysep=32, poly] at (1.85,0) (H) {};
\node[regular polygon, regular polygon sides=4, minimum size=32, rotate=45, poly] at (0.93,0) (A) {};
\node[regular polygon, regular polygon sides=4, minimum size=32, rotate=45, poly] at (2.76,0) (B) {};
\draw[poly] (A.corner 1) -- (H.north west);
\draw[poly] (A.corner 2) -- (H.west);
\draw[poly] (A.corner 3) -- (H.south west);
\draw[poly] (A.corner 4) -- (B.corner 2);
\draw[poly] (B.corner 1) -- (H.north east);
\draw[poly] (B.corner 4) -- (H.east);
\draw[poly] (H.south east) -- (B.corner 3);
\foreach \i in {1,3} \node[v1] at (A.corner \i) {};
\foreach \i in {2,4} \node[v0] at (A.corner \i) {};
\foreach \i in {1,3} \node[v0] at (B.corner \i) {};
\foreach \i in {2,4} \node[v1] at (B.corner \i) {};
\foreach \vertex in {east,north west,south west} \node[v0] at (H.\vertex) {};
\foreach \vertex in {north east,west,south east} \node[v1] at (H.\vertex) {};
\end{scope}
\end{tikzpicture}
}
\caption{Generating all primitives on $14$ vertices.}
\label{fig:14vertices}
\end{figure}

\begin{figure}[ht]
\scalebox{0.9}{
\begin{tikzpicture}
\begin{scope}
\node[regular polygon, regular polygon sides=6, minimum size=38, poly] at (0,0) (A) {};
\node[regular polygon, regular polygon sides=6, minimum size=78, poly] at (0,0) (B) {};
\foreach \i in {1,...,6} {\draw[poly] (A.corner \i) -- (B.corner \i);}
\foreach \i in {1,3,5}{
	\node[v1] at (A.corner \i) {};
	\node[v0] at (B.corner \i) {};
}
\foreach \i in {2,4,6}{
	\node[v0] at (A.corner \i) {};
	\node[v1] at (B.corner \i) {};
}
\draw[->] (1.8,0.5) -- (2.3,0.8);
\draw[->] (1.8,-0.5) -- (2.3,-0.8);
\end{scope}
\begin{scope}[xshift=120,yshift=45]
\node[regular polygon, regular polygon sides=6, minimum size=45, poly] at (0,0) (A) {};
\node[regular polygon, regular polygon sides=6, minimum size=80, poly] at (0,0) (B) {};
\foreach \i in {1,...,6} {\draw[poly] (A.corner \i) -- (B.corner \i);}
\foreach \i in {1,3,5}{
	\node[v1] at (A.corner \i) {};
	\node[v0] at (B.corner \i) {};
}
\foreach \i in {2,4,6}{
	\node[v0] at (A.corner \i) {};
	\node[v1] at (B.corner \i) {};
}
\draw[poly,insertcol] (A.corner 1)+(-0.25,0) node[v0]{} -- +(-0.25,0.54) node[v1]{};
\draw[poly,insertcol] (A.corner 1)+(-0.53,0) node[v1]{} -- +(-0.53,0.54) node[v0]{};
\node at (2,0) {$\cong$};
\end{scope}
\begin{scope}[xshift=230,yshift=45]
\node[regular polygon, regular polygon sides=8, minimum size=40, poly] at (0,0) (A) {};
\node[regular polygon, regular polygon sides=8, minimum size=80, poly] at (0,0) (B) {};
\foreach \i in {1,...,8} {\draw[poly] (A.corner \i) -- (B.corner \i);}
\foreach \i in {1,3,5,7}{
	\node[v1] at (A.corner \i) {};
	\node[v0] at (B.corner \i) {};
}
\foreach \i in {2,4,6,8}{
	\node[v0] at (A.corner \i) {};
	\node[v1] at (B.corner \i) {};
}
\end{scope}
\begin{scope}[xshift=118,yshift=-45]
\node[regular polygon, regular polygon sides=6, minimum size=35, poly] at (0,0) (A) {};
\node[regular polygon, regular polygon sides=6, minimum size=80, poly] at (0,0) (B) {};
\foreach \i in {1,...,6} {\draw[poly] (A.corner \i) -- (B.corner \i);}
\foreach \i in {1,3,5}{
	\node[v1] at (A.corner \i) {};
	\node[v0] at (B.corner \i) {};
}
\foreach \i in {2,4,6}{
	\node[v0] at (A.corner \i) {};
	\node[v1] at (B.corner \i) {};
}
\draw[poly,insertcol] (A.corner 2)+(-0.26,0.46) node[v0]{} -- +(0.88,0.46) node[v1]{};
\draw[poly,insertcol] (A.corner 2)+(-0.15,0.24) node[v1]{} -- +(0.76,0.24) node[v0]{};
\node at (2,0) {$\cong$};
\end{scope}
\begin{scope}[xshift=240,yshift=-45]
\node[shape=rounded rectangle, inner xsep=48, inner ysep=38, poly] at (0,0) (H) {};
\node[shape=rectangle, inner xsep=30, inner ysep=20] at (0,0) (B) {};
\node[shape=rectangle, inner xsep=20, inner ysep=10, poly] at (0,0) (A) {};
\draw[poly] (A.south)+(-0.25,0) node[v0]{} -- +(-0.25,0.7) node[v1]{}; 
\draw[poly] (A.south)+(0.25,0) node[v1]{} -- +(0.25,0.7) node[v0]{}; 
\draw[poly] (A.north west) -- (B.north west) -- +(-0.58,-0.38) node[v0]{};
\draw[poly] (A.south west) -- (B.south west) -- +(-0.58,0.38) node[v1]{};
\draw[poly] (A.north east) -- (B.north east) -- +(0.58,-0.38) node[v1]{};
\draw[poly] (A.south east) -- (B.south east) -- +(0.58,0.38) node[v0]{};
\draw[poly] (B.north east) -- (B.north west);
\draw[poly] (B.south east) -- (B.south west);
\foreach \vertex in {north west,south east} \node[v0] at (A.\vertex) {};
\foreach \vertex in {north east,south west} \node[v1] at (A.\vertex) {};
\foreach \vertex in {north east,south west} \node[v0] at (B.\vertex) {};
\foreach \vertex in {north west,south east} \node[v1] at (B.\vertex) {};
\end{scope}
\end{tikzpicture}
}
\caption{Generating all primitives on $16$ vertices.}
\label{fig:16vertices}
\end{figure}

\begin{figure}[ht]
\scalebox{0.85}{
\begin{tikzpicture}
\begin{scope}
\node[regular polygon, regular polygon sides=6, minimum size=38, poly] at (0,0) (A) {};
\node[regular polygon, regular polygon sides=6, minimum size=78, poly] at (0,0) (B) {};
\foreach \i in {1,...,6} {\draw[poly] (A.corner \i) -- (B.corner \i);}
\foreach \i in {1,3,5}{
	\node[v1] at (A.corner \i) {};
	\node[v0] at (B.corner \i) {};
}
\foreach \i in {2,4,6}{
	\node[v0] at (A.corner \i) {};
	\node[v1] at (B.corner \i) {};
}
\draw[->] (2,0) -- (2.5,0);
\end{scope}
\begin{scope}[xshift=128]
\node[regular polygon, regular polygon sides=6, minimum size=36, poly] at (-0.25,0) (A) {};
\node[regular polygon, regular polygon sides=6, minimum size=80] at (0,0) (B) {};
\node[regular polygon, regular polygon sides=6, minimum size=32, poly,insertcol] at (B.corner 6) (C) {};
\foreach \i/\j in {1/2,2/3,3/4,4/5} {
	\draw[poly] (B.corner \i) -- (B.corner \j);
}
\draw[poly] (B.corner 1) -- (C.corner 1);
\draw[poly] (B.corner 5) -- (C.corner 5);
\draw[poly] (A.corner 6) -- (C.corner 3);
\foreach \i in {2,4,6}{
	\draw[poly,insertcol] (B.corner 6) -- (C.corner \i);
}
\foreach \i in {1,3,5}{
	\node[v1] at (C.corner \i) {};
}
\foreach \i in {2,4,6}{
	\node[v0] at (C.corner \i) {};
}
\foreach \i in {1,...,5} {\draw[poly] (A.corner \i) -- (B.corner \i);}
\foreach \i in {1,3,5}{
	\node[v1] at (A.corner \i) {};
	\node[v0] at (B.corner \i) {};
}
\foreach \i in {2,4,6}{
	\node[v0] at (A.corner \i) {};
	\node[v1] at (B.corner \i) {};
}
\node at (2.55,0) {$\cong$};
\end{scope}
\begin{scope}[xshift=220]
\node[shape=rounded rectangle, inner xsep=60, inner ysep=35, poly] at (2,0) (H) {};
\node[regular polygon, regular polygon sides=6, minimum size=35, poly] at (1.2,0) (A) {};
\draw[poly] (H.west) node[v0]{} -- (A.corner 3); 
\draw[poly] (A.corner 1) -- +(0,18pt);
\draw[poly] (A.corner 2) -- +(0,18pt);
\draw[poly] (A.corner 4) -- +(0,-18pt);
\draw[poly] (A.corner 5) -- +(0,-18pt);
\node[above=17pt,v0] at (A.corner 1) {};
\node[above=17pt,v1] at (A.corner 2) {};
\node[below=17pt,v1] at (A.corner 4) {};
\node[below=17pt,v0] at (A.corner 5) {};
\node[regular polygon, regular polygon sides=4, minimum size=28, rotate=45, poly] at (3,0) (B) {};
\draw[poly] (A.corner 6) -- (B.corner 2); 
\draw[poly] (B.corner 4) -- (H.east) node[v0]{}; 
\draw[poly] (B.corner 1) -- +(0,20pt);
\draw[poly] (B.corner 3) -- +(0,-20pt);
\node[above=18pt,v1] at (B.corner 1) {};
\node[below=18pt,v1] at (B.corner 3) {};
\foreach \i in {2,4,6} \node[v0] at (A.corner \i) {};
\foreach \i in {1,3,5} \node[v1] at (A.corner \i) {};
\foreach \i in {1,3} \node[v0] at (B.corner \i) {};
\foreach \i in {2,4} \node[v1] at (B.corner \i) {};
\end{scope}
\end{tikzpicture}
}
\bigskip

\scalebox{0.85}{
\begin{tikzpicture}
\begin{scope}
\node[shape=rounded rectangle, inner xsep=60, inner ysep=35, poly] at (0,0) (H) {};
\node[regular polygon, regular polygon sides=4, minimum size=32, rotate=45, poly] at (-0.88,0) (A) {};
\node[regular polygon, regular polygon sides=4, minimum size=32, rotate=45, poly] at (0.88,0) (B) {};
\draw[poly] (A.corner 1) -- (H.north west);
\draw[poly] (A.corner 2) -- (H.west);
\draw[poly] (A.corner 3) -- (H.south west);
\draw[poly] (A.corner 4) -- (B.corner 2);
\draw[poly] (B.corner 1) -- (H.north east);
\draw[poly] (B.corner 4) -- (H.east);
\draw[poly] (H.south east) -- (B.corner 3);
\foreach \i in {1,3} \node[v0] at (A.corner \i) {};
\foreach \i in {2,4} \node[v1] at (A.corner \i) {};
\foreach \i in {1,3} \node[v1] at (B.corner \i) {};
\foreach \i in {2,4} \node[v0] at (B.corner \i) {};
\foreach \vertex in {east,north west,south west} \node[v1] at (H.\vertex) {};
\foreach \vertex in {north east,west,south east} \node[v0] at (H.\vertex) {};
\draw[->] (2.5,0) -- (3,0);
\end{scope}
\begin{scope}[xshift=156]
\node[shape=rounded rectangle, inner xsep=60, inner ysep=35, poly] at (0,0) (H) {};
\node[regular polygon, regular polygon sides=4, minimum size=32, rotate=45, poly] at (-0.88,-0.1) (A) {};
\node[regular polygon, regular polygon sides=4, minimum size=32, rotate=45, poly] at (0.88,-0.1) (B) {};
\draw[poly] (A.corner 1) -- (H.north west);
\draw[poly] (A.corner 2) -- (H.west);
\draw[poly] (A.corner 3) -- (H.south west);
\draw[poly] (A.corner 4) -- (B.corner 2);
\draw[poly] (B.corner 1) -- (H.north east);
\draw[poly] (B.corner 4) -- (H.east);
\draw[poly] (H.south east) -- (B.corner 3);
\foreach \i in {1,3} \node[v0] at (A.corner \i) {};
\foreach \i in {2,4} \node[v1] at (A.corner \i) {};
\foreach \i in {1,3} \node[v1] at (B.corner \i) {};
\foreach \i in {2,4} \node[v0] at (B.corner \i) {};
\foreach \vertex in {east,north west,south west} \node[v1] at (H.\vertex) {};
\foreach \vertex in {north east,west,south east} \node[v0] at (H.\vertex) {};
\draw[poly,insertcol] (A.corner 1)+(0,0.52) node[v0]{} -- +(1.762,0.52) node[v1]{};
\draw[poly,insertcol] (A.corner 1)+(0,0.26) node[v1]{} -- +(1.762,0.26) node[v0]{};
\node at (2.6,0) {$\cong$};
\end{scope}
\begin{scope}[xshift=288]
\node[poly, regular polygon, regular polygon sides=6, minimum size=45] at (0,0) (A) {};
\node[poly, regular polygon, regular polygon sides=6, minimum size=90] at (0,0) (B) {};
\node[poly, regular polygon, regular polygon sides=6, minimum size=70, white] at (0,0) (C) {};
\foreach \i in {1,...,6} \draw[poly] (A.corner \i) -- (B.corner \i);
\foreach \i/\j in {1/2,3/4,5/6} \draw[poly] (C.corner \i) -- (C.corner \j);
\foreach \i in {2,4,6}{
	\node[v1] at (A.corner \i) {};
	\node[v1] at (B.corner \i) {};
	\node[v0] at (C.corner \i) {};
}
\foreach \i in {1,3,5}{ 
	\node[v0] at (A.corner \i) {};
	\node[v0] at (B.corner \i) {};
	\node[v1] at (C.corner \i) {};
}
\end{scope}
\end{tikzpicture}
}
\caption{Generating all primitives on $18$ vertices.}
\label{fig:18vertices}
\end{figure}

In this section, we will prove that for $n\ge 4$, {\em every} 3-connected bicubic planar map on $2n$ vertices can be constructed from a smaller primitive by means of the two insertion operations illustrated in the above examples. These operations were previously discussed in \cite{HKKN12} and referred to as 2-bridging (4-vertex addition) and hexagon addition (6-vertex addition).

First, let us formalize the needed insertion operations. Suppose a 3-connected bicubic planar map $M$ on $2n$ vertices is given.
\begin{enumerate}
\item[(a)] To insert 4 vertices into $M$, we select a face $F$ and choose two of its edges, say $e$ and $e'$, that are connected by an odd number of edges in both directions along the boundary of $F$. We then insert two additional vertices on each of $e$ and $e'$, preserving the bipartite property, and connect each vertex on $e$ with a vertex on $e'$ through $F$ such that the resulting map, call it $\hat M_{F''}$, is bicubic and planar (see Figure~\ref{fig:insertions}a). Here $F''$ stands for the face $F$ with the distinguished edges $e$ and $e'$. 
\item[(b)] To insert 6 vertices into $M$, we select a vertex $v$ and blow it up by adding a hexagon centered at $v$. We then connect every other vertex of the hexagon with the three vertices originally adjacent to $v$ and connect the remaining three vertices of the hexagon with vertex $v$, coloring the added vertices such that the resulting map, call it $\hat M_v$, is bicubic and planar (see Figure~\ref{fig:insertions}b).
\end{enumerate}

\begin{figure}[ht]
\begin{tikzpicture}
\begin{scope}
\node[regular polygon, regular polygon sides=4, minimum size=50] at (0,0) (A) {};
\draw[poly,dashed,brown!70] (A.corner 2) .. controls +(200:12pt) and +(160:12pt) .. (A.corner 3);
\draw[poly,dashed,brown!70] (A.corner 1) .. controls +(-20:12pt) and +(20:12pt) .. (A.corner 4);
\draw[poly] (A.corner 2) node[v1]{} -- (A.corner 1) node[v0]{}; 
\draw[poly] (A.corner 4) node[v1]{} -- (A.corner 3) node[v0]{}; 
\node at (0,0) {\small $F$};
\node at (0,0.85) {\small $e'$};
\node at (0,-0.85) {\small $e$};
\node at (1.6,0) {$\mapsto$};
\node at (1.6,-1.6) {(a)};
\end{scope}
\begin{scope}[xshift=90]
\node[regular polygon, regular polygon sides=4, minimum size=50] at (0,0) (A) {};
\draw[poly,dashed,brown!70] (A.corner 2) .. controls +(200:12pt) and +(160:12pt) .. (A.corner 3);
\draw[poly,dashed,brown!70] (A.corner 1) .. controls +(-20:12pt) and +(20:12pt) .. (A.corner 4);
\draw[poly] (A.corner 2) node[v1]{} -- (A.corner 1) node[v0]{}; 
\draw[poly] (A.corner 4) node[v1]{} -- (A.corner 3) node[v0]{};
\draw[poly,insertcol] (A.corner 2)+(0.4,0) node[v0]{} -- (-0.225,-0.625) node[v1]{};
\draw[poly,insertcol] (A.corner 2)+(0.85,0) node[v1]{} -- (0.225,-0.625) node[v0]{};
\end{scope}
\begin{scope}[xshift=190]
\node[regular polygon, regular polygon sides=6, minimum size=40] at (0,0) (A) {};
\draw[poly2] (A.corner 1) -- +(90:0.3);
\draw[poly2] (A.corner 1) -- +(30:0.3);
\draw[poly2] (A.corner 3) -- +(150:0.3);
\draw[poly2] (A.corner 3) -- +(210:0.3);
\draw[poly2] (A.corner 5) -- +(-30:0.3);
\draw[poly2] (A.corner 5) -- +(-90:0.3);
\foreach \i in {1,3,5}{ \draw[poly] (0,0) -- (A.corner \i) node[v0]{};}
\node[v1] at (0,0) {};
\node[right] at (0,0) {\small $v$};
\node at (1.2,0) {$\mapsto$};
\node at (1.2,-1.6) {(b)};
\end{scope}
\begin{scope}[xshift=280]
\node[regular polygon, regular polygon sides=6, minimum size=32,poly,insertcol] at (0,0) (A) {};
\node[regular polygon, regular polygon sides=6, minimum size=64] at (0,0) (B) {};
\draw[poly2] (B.corner 1) -- +(90:0.3);
\draw[poly2] (B.corner 1) -- +(30:0.3);
\draw[poly2] (B.corner 3) -- +(150:0.3);
\draw[poly2] (B.corner 3) -- +(210:0.3);
\draw[poly2] (B.corner 5) -- +(-30:0.3);
\draw[poly2] (B.corner 5) -- +(-90:0.3);
\foreach \i in {1,3,5}{ \draw[poly] (A.corner \i) -- (B.corner \i) node[v0]{};}
\foreach \i in {2,4,6}{ \draw[poly,insertcol] (0,0) -- (A.corner \i) node[v0]{};}
\foreach \i in {1,3,5}{ \node[v1] at (A.corner \i){};}
\node[v1] at (0,0) {};
\node[above right=0] at (0,0) {\small $v$};
\end{scope}
\end{tikzpicture}
\caption{The two vertex insertion operations.}
\label{fig:insertions}
\end{figure}

By construction, both $\hat M_{F''}$ and $\hat M_v$ are 3-connected bicubic planar maps on $2n+4$ and $2n+6$ vertices, respectively.

\begin{theorem}\label{thm:primitiveConstruction}
For $n\ge 4$, every 3-connected bicubic planar map on $2n$ vertices can be constructed from one on $2n-4$ or $2n-6$ vertices by means of the above two operations.
\end{theorem}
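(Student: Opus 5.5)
We argue by reversing the two insertions: given a 3-connected bicubic planar map $M$ on $2n$ vertices with $n\ge 4$, we find either a \emph{removable ladder} (the inverse of operation (a)) or a \emph{removable hexagon configuration} (the inverse of operation (b)) whose reduction is again 3-connected. The first step is a face count. Every face of a bipartite map has even length, so all faces have length at least $4$ (only $\Theta$ has 2-gons, and 3-connectivity excludes them for $n\ge 2$). Euler's formula gives $\sum_F (6-|F|) = 12$ for a cubic map, so $M$ has at least three faces of length $4$ when no hexagon appears, and more generally $f_4 \ge 3 + \sum_{k\ge 4} (k-3) f_{2k}$. In particular, $M$ always contains a quadrilateral face.

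The second step treats a quadrilateral face $Q$ with vertices $a,b,c,d$ and opposite edges $ab$ and $cd$. Deleting the edges $ab$ and $cd$ and suppressing the four resulting degree-2 vertices reverses operation (a). The two edges $e,e'$ of the smaller map lie on a common face, separated by odd paths, so the result $M'$ is bicubic and planar on $2n-4$ vertices, and applying operation (a) to $M'$ returns $M$. We choose between the two pairs of opposite edges of $Q$. The reduction fails only if $M'$ is not 3-connected; for a cubic map this is equivalent to $M'$ having a 2-edge cut or a multiple edge. Lifting such a cut back to $M$ yields a cyclic 3- or 4-edge cut in $M$ through $Q$ with a specific structure. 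We aim to show that if \emph{both} choices fail at $Q$, then $Q$ is adjacent to short faces in a way that forces a local configuration: either $M$ is the cube ($n=4$, which arises from $\Theta$ by operation (b)), or $Q$ sits in a cluster where a vertex $v$ has all three surrounding faces of length $4$.

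The third step handles that configuration. If $v$ is a vertex whose three incident faces are quadrilaterals, then $v$, its three neighbours, and the three further vertices closing these quadrilaterals form exactly the picture of Figure~\ref{fig:insertions}b. Collapsing this hexagon back to a single vertex gives a bicubic planar map on $2n-6$ vertices, and operation (b) recovers $M$. We must check that the contracted map is 3-connected. A 2-cut there would lift to a 3-edge cut in $M$ separating the hexagon cluster from something nontrivial, and we expect this to contradict the assumption that both quadrilateral reductions in the cluster failed. Alternatively, an innermost choice of such a cut produces a quadrilateral on which reduction (a) succeeds.

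The main obstacle is the second step: controlling exactly when deleting a pair of opposite edges of a quadrilateral destroys 3-connectivity, and showing that failures in both directions (and at every quadrilateral) force the three-quadrilaterals-at-a-vertex configuration. Our first approach is an extremal argument. Among all quadrilaterals and all 3-edge cuts obstructing their reduction, we choose the one cutting off the smallest side. We then show that this side contains a quadrilateral that reduces cleanly, or that the smallest side is itself the hexagon cluster of step three. The face-count inequality from step one, applied to the small side closed up by a single vertex, supplies the quadrilaterals needed inside it.
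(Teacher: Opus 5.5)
\emph{Gap: Step 2 is false as stated, not merely unproved.} Take the cube, let $Q$ be a face and $Y$ the opposite face, and apply the 6-vertex insertion at each of the four vertices of $Y$. The result is a 3-connected bicubic map on 32 vertices with the following properties:
\begin{itemize}
\item $Q$ is still a quadrilateral, and the four faces $X_1,\dots,X_4$ around it are 8-faces;
\item both reductions at $Q$ fail: deleting $Q\cap X_1$ and $Q\cap X_3$ and suppressing leaves $\{Y\cap X_1,\,Y\cap X_3\}$ as a 2-edge cut, and symmetrically for the other pair;
\item $Q$ is adjacent to no quadrilateral at all.
\end{itemize}
So ``both choices fail at $Q$ $\Rightarrow$ $Q$ sits in a hexagon cluster'' cannot be proved, and no local analysis at a failing quadrilateral suffices.

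Your extremal sketch also targets the wrong cuts. A 2-edge cut of the reduced map $M'$ that uses one of the two new edges would already be a 2-edge cut of $M$. Every obstruction is therefore a 2-edge cut $\{f,g\}$ of old edges (parallel edges included). It lifts to a 4-edge cut $\{ab,cd,f,g\}$ of $M$ separating $\{a,d\}$ from $\{b,c\}$, never to a 3-edge cut. In terms of faces: merging $X_1,Q,X_3$ fails iff some face $Y\notin\{Q,X_2,X_4\}$ shares an edge with both $X_1$ and $X_3$. By planarity, both reductions fail iff a single face $Y\neq Q$ shares an edge with each of $X_1,\dots,X_4$. This lemma is what your proposal is missing.

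With that lemma, your last paragraph can be made to work, but with cyclic 3-edge cuts playing a different role than the one you gave them. Step 3 is easy: the seven-vertex cluster at a vertex with three quadrilaterals is attached by exactly three edges. Hence any 2-edge cut of the contracted map is a 2-edge cut of $M$, and a parallel edge forces $M$ to be the cube, so the 6-reduction is always valid.

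Now assume no quadrilateral reduces.
\begin{itemize}
\item If $M$ has no nontrivial 3-edge cut, take any quadrilateral $Q$ and its face $Y$. The cuts $\{Y\cap X_i,\,X_i\cap X_{i+1},\,X_{i+1}\cap Y\}$ must all be trivial, which makes $M$ the cube.
\item Otherwise, choose a nontrivial 3-edge cut with inclusion-minimal side $S$ and close $S$ with one vertex $z$. The three cut edges leave $S$ from one colour class, so $M_S$ is a 3-connected bicubic map with no nontrivial 3-edge cut.
\item Euler actually gives $f_4=6+\sum_{k\ge4}(k-3)f_{2k}$. (Your bound $f_4\ge 3$ would allow every quadrilateral of $M_S$ to sit at $z$.) So some quadrilateral $Q$ of $M$ lies in $S$ away from $z$.
\item Its face $Y$ cannot avoid $S$, since otherwise all four $X_i$ would be among the three faces at $z$. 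The same argument inside $M_S$ then shows $M_S$ is the cube, i.e.\ $S$ is a hexagon cluster.
\end{itemize}

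For comparison, the paper's proof is also a purely local case analysis in the dual around one 4-face (the vertex $E$, cases (i)/(ii)). It asserts without checking that collapsing $A,F,C$ in case (ii) yields a 3-connected map. The example above is a case (ii) configuration in which that collapse fails, so the global minimality step is needed there too.
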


Before we go on to prove this result, let us illustrate the effect of the above two insertion operations on the dual planar maps. It is worth mentioning that if $M$ is cubic, then its dual map $M^*$ is a plane triangulation. Also, if $M$ is bipartite, then every vertex of $M^*$ has even degree, and the faces of $M^*$ can be 2-colored such that any pair of adjacent faces have different colors. In our discussion below, we will ignore the coloring in the dual.

\begin{figure}[ht]
\begin{tikzpicture}
\begin{scope}[yshift=84]
\node[regular polygon, regular polygon sides=4, minimum size=50] at (0,0) (A) {};
\draw[poly,dashed,label] (A.corner 2) .. controls +(200:12pt) and +(160:12pt) .. (A.corner 3);
\draw[poly,dashed,label] (A.corner 1) .. controls +(-20:12pt) and +(20:12pt) .. (A.corner 4);
\draw[poly] (A.corner 2) node[v1]{} -- (A.corner 1) node[v0]{}; 
\draw[poly] (A.corner 4) node[v1]{} -- (A.corner 3) node[v0]{};
\node at (0,0) {\small $F$};
\node at (0,0.85) {\small $e'$};
\node at (0,-0.8) {\small $e$};
\draw[thick,->] (0,-1.2) -- (0,-1.8);
\node[left=10pt] at (0,-1.4) {\small $M\mapsto \hat M_{F''}$}; 
\draw[thick,<->] (1.5,0) -- (2,0) node[above]{$*$} -- (2.5,0);
\end{scope}
\begin{scope}[xshift=115,yshift=84]
\draw[poly] (0,-0.7) node[v,black]{} -- (0,0.7) node[v,black]{};
\foreach \a in {-60,0,60,180}{\draw[poly,dashed,label] (0,0) -- (\a:0.6);} 
\node[v] at (0,0){}; 
\draw[thick,->] (0,-1.2) -- (0,-1.8);
\node[right=10pt] at (0,-1.4) {\small $M^*\mapsto \hat M^*_{F''}$}; 
\end{scope}
\begin{scope}
\node[regular polygon, regular polygon sides=4, minimum size=50] at (0,0) (A) {};
\draw[poly,dashed,label] (A.corner 2) .. controls +(200:12pt) and +(160:12pt) .. (A.corner 3);
\draw[poly,dashed,label] (A.corner 1) .. controls +(-20:12pt) and +(20:12pt) .. (A.corner 4);
\draw[poly] (A.corner 2) node[v1]{} -- (A.corner 1) node[v0]{}; 
\draw[poly] (A.corner 4) node[v1]{} -- (A.corner 3) node[v0]{};
\draw[poly] (A.corner 2)+(0.4,0) node[v0]{} -- (-0.225,-0.625) node[v1]{};
\draw[poly] (A.corner 2)+(0.85,0) node[v1]{} -- (0.225,-0.625) node[v0]{};
\draw[thick,<->] (1.5,0) -- (2,0) node[above]{$*$} -- (2.5,0);
\end{scope}
\begin{scope}[xshift=115]
\draw[poly] (0,-0.7) -- (-0.5,0) -- (0,0.7);
\draw[poly] (0,-0.7) -- (0.5,0) -- (0,0.7);
\foreach \a in {180}{\draw[poly,dashed,label] (-0.5,0) -- +(\a:0.6);} 
\foreach \a in {-60,0,60}{\draw[poly,dashed,label] (0.5,0) -- +(\a:0.6);} 
\draw[poly] (-0.5,0) node[v,black]{} -- (0.5,0) node[v,black]{};
\draw[poly] (0,-0.7) node[v,black]{} -- (0,0.7) node[v,black]{};
\node[v] at (0,0){}; 
\end{scope}
\end{tikzpicture}
\caption{The 4-vertex insertion and its dual.}
\label{fig:4vertexInsertion}
\end{figure}

The first of our insertion operations is described by the diagram in Figure~\ref{fig:4vertexInsertion}, where the dashed edges in $M^*$ are supposed to represent the edges in the odd-length paths connecting $e$ and $e'$ in $M$. As expected, the planar map $\hat M^*_{F''}$ has four additional faces corresponding to the four additional vertices of $\hat M_{F''}$.

\begin{figure}[ht]
\begin{tikzpicture}
\tikzstyle{poly3}=[outer sep=0pt, draw, line width=1, gray!70]
\begin{scope}[yshift=105]
\node[regular polygon, regular polygon sides=6, minimum size=40] at (0,0) (A) {};
\foreach \i/\a in {1/30,1/90,3/150,3/210,5/-90,5/-30} {\draw[poly2] (A.corner \i) -- +(\a:0.3);}
\foreach \i in {1,3,5}{ \draw[poly] (0,0) -- (A.corner \i) node[v0]{};}
\node[v1] at (0,0) {};
\node[above=3pt] at (0,0) {\small $v$};
\draw[thick,->] (0,-1.4) -- (0,-2);
\node[left=10pt] at (0,-1.7) {\small $M\mapsto \hat M_{v}$}; 
\draw[thick,<->] (1.5,0) -- (2,0) node[above]{$*$} -- (2.5,0);
\end{scope}
\begin{scope}[xshift=125,yshift=105]
\node[regular polygon, regular polygon sides=3, minimum size=45,rotate=-30] at (0,0) (A) {};
\node[regular polygon, regular polygon sides=3, minimum size=35,poly,rotate=30] at (0,0) (B) {};
\foreach \i/\j in {1/1,1/3,2/1,2/2,3/2,3/3} {\draw[poly2,dashed] (A.corner \i) -- (B.corner \j);}  
\foreach \i in {1,2,3} {\node[v] at (B.corner \i) {};}
\draw[thick,->] (0,-1.4) -- (0,-2);
\node[right=10pt] at (0,-1.7) {\small $M^*\mapsto \hat M^*_{v}$}; 
\end{scope}
\begin{scope}
\node[regular polygon, regular polygon sides=6, minimum size=32,poly] at (0,0) (A) {};
\node[regular polygon, regular polygon sides=6, minimum size=64] at (0,0) (B) {};
\foreach \i/\a in {1/30,1/90,3/150,3/210,5/-90,5/-30} {\draw[poly2] (B.corner \i) -- +(\a:0.3);}
\foreach \i in {1,3,5}{ \draw[poly] (A.corner \i) -- (B.corner \i) node[v0]{};}
\foreach \i in {2,4,6}{ \draw[poly] (0,0) -- (A.corner \i) node[v0]{};}
\foreach \i in {1,3,5}{ \node[v1] at (A.corner \i){};}
\node[v1] at (0,0) {};
\draw[thick,<->] (1.5,0) -- (2,0) node[above]{$*$} -- (2.5,0);
\end{scope}
\begin{scope}[xshift=125]
\node[regular polygon, regular polygon sides=3, minimum size=70,rotate=-30] at (0,0) (A) {};
\node[regular polygon, regular polygon sides=3, minimum size=70,poly,rotate=30] at (0,0) (B) {};
\node[regular polygon, regular polygon sides=3, minimum size=20,poly3,rotate=-30] at (0,0) (C) {};
\foreach \i/\j in {1/1,1/3,2/1,2/2,3/2,3/3} {\draw[poly2,dashed] (A.corner \i) -- (B.corner \j);}  
\foreach \i/\j in {1/1,1/2,2/2,2/3,3/3,3/1} {\draw[poly3] (B.corner \i) -- (C.corner \j);}
\foreach \i in {1,2,3} {\node[v] at (B.corner \i) {};}
\foreach \i in {1,2,3} {\node[circle, inner sep=1.3, fill] at (C.corner \i) {};}
\end{scope}
\end{tikzpicture}
\caption{The 6-vertex insertion and its dual.}
\label{fig:6vertexInsertion}
\end{figure}

Our second insertion operation leads to the diagram in Figure~\ref{fig:6vertexInsertion}. Note that, as expected, the dual $ \hat M^*_{v}$ has six additional faces corresponding to the six additional vertices of $\hat M_{v}$.

\begin{proof}[Proof of Theorem~\ref{thm:primitiveConstruction}]
For $n\ge 4$, any 3-connected bicubic planar map $M$ on $2n$ vertices must contain a 4-face, say $F$, with incident faces $A$, $B$, $C$, and $D$. In the dual $M^*$, face $F$ together with its incident faces corresponds to a subgraph as the one shown in Figure~\ref{fig:primeSubgraph}(a). 
\begin{figure}[ht]
\begin{tikzpicture}
\begin{scope}
\node[regular polygon, regular polygon sides=4, minimum size=50, poly] at (0,0) (A) {};
\draw[poly] (A.corner 3) -- (A.corner 1) -- (A.corner 2) -- (A.corner 4);
\foreach \i in {1,2,3,4} {\node[v] at (A.corner \i){};}
\node[v] at (0,0){} node[right=2pt] {\scriptsize $F$};
\foreach \i/\a in {1/A,4/B} {\node[right] at (A.corner \i){\scriptsize $\a$};}
\foreach \i/\a in {2/D,3/C} {\node[left] at (A.corner \i){\scriptsize $\a$};}
\node at (0,-1.2) {(a)};
\end{scope}
\node at (2.1,0) {$\leadsto$};
\begin{scope}[xshift=120]
\node[regular polygon, regular polygon sides=4, minimum size=50, poly] at (0,0) (A) {};
\draw[poly] (A.corner 3) -- (A.corner 1) -- (0,1.25) node[v,black]{} -- (A.corner 2) -- (A.corner 4);
\node[above=3pt] at (0,1.2) {\scriptsize $E$};
\foreach \i in {1,2,3,4} {\node[v] at (A.corner \i){};}
\node[v] at (0,0){} node[right=2pt] {\scriptsize $F$};
\foreach \i/\a in {1/A,4/B} {\node[right] at (A.corner \i){\scriptsize $\a$};}
\foreach \i/\a in {2/D,3/C} {\node[left] at (A.corner \i){\scriptsize $\a$};}
\node at (0,-1.2) {(b)};
\end{scope}
\end{tikzpicture}
\caption{Subgraphs of $M^*$.}
\label{fig:primeSubgraph}
\end{figure} 
Moreover, since $M^*$ is a triangulation and its vertices have even degree, there must exist at least one additional vertex $E$ of $M^*$ that is part of a $3$-face incident to one of the edges of that dual subgraph. Thus, without loss of generality, $M^*$ must contain a subgraph as the one shown in Figure~\ref{fig:primeSubgraph}(b). 

Now, there are two possibilities: $(i)$ Vertices $A$, $B$, $E$ are on the same 3-face, and vertices $C$, $D$, $E$ are the vertices of another 3-face, see Figure~\ref{fig:ExtendedSubgraph}(a); or $(ii)$ one of these two sets of vertices, say the first one, does not define a 3-face, see Figure~\ref{fig:ExtendedSubgraph}(b), implying that vertex $A$ must have additional incident edges.

\begin{figure}[ht]
\begin{tikzpicture}
\begin{scope}
\node[regular polygon, regular polygon sides=4, minimum size=50, poly] at (0,0) (A) {};
\draw[poly] (0,1.25) to[out=0, in=100] (1.15,0.4) to[out=-80, in=0](A.corner 4);
\draw[poly] (0,1.25) to[out=180, in=80] (-1.15,0.4) to[out=-100, in=180](A.corner 3);
\draw[poly] (A.corner 3) -- (A.corner 1) -- (0,1.25) node[v,black]{} -- (A.corner 2) -- (A.corner 4);
\node[above=3pt] at (0,1.2) {\scriptsize $E$};
\foreach \i in {1,2,3,4} {\node[v] at (A.corner \i){};}
\node[v] at (0,0){} node[right=2pt] {\scriptsize $F$};
\foreach \i/\a in {1/A,4/B} {\node[right] at (A.corner \i){\scriptsize $\a$};}
\foreach \i/\a in {2/D,3/C} {\node[left] at (A.corner \i){\scriptsize $\a$};}
\node at (0,-1.2) {(a)};
\end{scope}
\node at (2.1,0) {or};
\begin{scope}[xshift=120]
\node[regular polygon, regular polygon sides=4, minimum size=50, poly] at (0,0) (A) {};
\draw[poly] (A.corner 3) -- (A.corner 1) -- (0,1.25) node[v,black]{} -- (A.corner 2) -- (A.corner 4);
\foreach \a in {30,65}{\draw[poly,dashed,label] (A.corner 1) -- +(\a:0.6);} 
\draw[poly,dashed,label] (A.corner 3) -- +(225:0.6);
\node[above=3pt] at (0,1.2) {\scriptsize $E$};
\foreach \i in {1,2,3,4} {\node[v] at (A.corner \i){};}
\node[v] at (0,0){} node[right=2pt] {\scriptsize $F$};
\foreach \i/\a in {1/A,4/B} {\node[right] at (A.corner \i){\scriptsize $\a$};}
\foreach \i/\a in {2/D,3/C} {\node[left] at (A.corner \i){\scriptsize $\a$};}
\node at (0,-1.2) {(b)};
\end{scope}
\end{tikzpicture}
\caption{Possible extended subgraphs of $M^*$.}
\label{fig:ExtendedSubgraph}
\end{figure} 

In case $(i)$, we recognize the dual of a 6-vertex insertion, so removing vertices $A$, $D$, $F$ and their incident edges from $M^*$ will give us the dual of a 3-connected bicubic planar map on $2n-6$ vertices. On the other hand, the subgraph in case $(ii)$ contains the dual of a 4-vertex insertion, so collapsing vertices $A$, $F$, $C$ in $M^*$ into a single vertex will give us the dual of a 3-connected bicubic planar map on $2n-4$ vertices.
\end{proof}

\begin{remark}
To construct all 3-connected bicubic planar maps, we need {\em both} of our insertion operations. For example, for $n>2$, the family of $2n$-prism graphs can all be built from the cube by means of iterative 4-vertex insertions, and they cannot be obtained from a smaller primitive via a 6-vertex insertion.

On the other hand, for $n\ge 1$, Figure~\ref{fig:infinite6vertex} shows a family of primitives on $8+6n$ vertices, built from the cube by means of iterative 6-vertex insertions, that cannot be obtained from a smaller primitive via a 4-vertex insertion.

\begin{figure}[ht]
\scalebox{0.8}{
\begin{tikzpicture}
\begin{scope}
\node[shape=rounded rectangle, inner xsep=56, inner ysep=33, poly] at (0,0) (H) {};
\node[regular polygon, regular polygon sides=4, minimum size=32, rotate=45, poly] at (-0.85,0) (A) {};
\node[regular polygon, regular polygon sides=4, minimum size=32, rotate=45, poly] at (0.87,0) (B) {};
\draw[poly] (A.corner 1) -- +(90:0.6) node[v1]{};
\draw[poly] (A.corner 3) -- +(-90:0.6) node[v1]{};
\draw[poly] (A.corner 2) -- (H.west) node[v0]{};
\draw[poly] (A.corner 4) -- (B.corner 2);
\draw[poly] (B.corner 1) -- +(90:0.6) node[v0]{};
\draw[poly] (B.corner 3) -- +(-90:0.6) node[v0]{};
\draw[poly] (B.corner 4) -- (H.east) node[v1]{};
\foreach \i in {1,3} \node[v0] at (A.corner \i) {};
\foreach \i in {2,4} \node[v1] at (A.corner \i) {};
\foreach \i in {1,3} \node[v1] at (B.corner \i) {};
\foreach \i in {2,4} \node[v0] at (B.corner \i) {};
\node[right=2pt] at (H.east) {,};
\end{scope}
\begin{scope}[xshift=156]
\node[shape=rounded rectangle, inner xsep=78, inner ysep=33, poly] at (0,0) (H) {};
\node[regular polygon, regular polygon sides=4, minimum size=32, rotate=45, poly] at (-1.67,0) (A) {};
\node[regular polygon, regular polygon sides=4, minimum size=32, rotate=45, poly] at (0,0) (B) {};
\node[regular polygon, regular polygon sides=4, minimum size=32, rotate=45, poly] at (1.67,0) (C) {};
\draw[poly] (A.corner 1) -- +(90:0.6) node[v1]{};
\draw[poly] (A.corner 3) -- +(-90:0.6) node[v1]{};
\draw[poly] (A.corner 2) -- (H.west) node[v0]{};
\draw[poly] (A.corner 4) -- (B.corner 2);
\draw[poly] (B.corner 1) -- +(90:0.6) node[v0]{};
\draw[poly] (B.corner 3) -- +(-90:0.6) node[v0]{};
\draw[poly] (B.corner 4) -- (C.corner 2);
\draw[poly] (C.corner 1) -- +(90:0.6) node[v1]{};
\draw[poly] (C.corner 3) -- +(-90:0.6) node[v1]{};
\draw[poly] (C.corner 4) -- (H.east) node[v0]{};
\foreach \i in {1,3} \node[v0] at (A.corner \i) {};
\foreach \i in {2,4} \node[v1] at (A.corner \i) {};
\foreach \i in {1,3} \node[v1] at (B.corner \i) {};
\foreach \i in {2,4} \node[v0] at (B.corner \i) {};
\foreach \i in {1,3} \node[v0] at (C.corner \i) {};
\foreach \i in {2,4} \node[v1] at (C.corner \i) {};
\node[right=2pt] at (H.east) {, $\dots$};
\end{scope}
\end{tikzpicture}
}
\caption{An infinite family of primitives generated by 6-vertex insertions.}
\label{fig:infinite6vertex}
\end{figure}

It is also worth mentioning that some primitive maps can be built with different insertion operations. For example, the map in Figure~\ref{fig:primitive18} can be obtained from the 6-prism by a 6-vertex insertion, or from the primitive map on 14 vertices by a 4-vertex insertion.

\begin{figure}[ht]
\begin{tikzpicture}[baseline=4]
\node[shape=rounded rectangle, inner xsep=60, inner ysep=32, poly] at (2,0) (H) {};
\node[regular polygon, regular polygon sides=6, minimum size=35, poly] at (1.2,0) (A) {};
\draw[poly] (H.west) node[v0]{} -- (A.corner 3); 
\draw[poly] (A.corner 1) -- +(0,17pt) node[v0]{};
\draw[poly] (A.corner 2) -- +(0,17pt) node[v1]{};
\draw[poly] (A.corner 4) -- +(0,-17pt) node[v1]{};
\draw[poly] (A.corner 5) -- +(0,-17pt) node[v0]{};
\node[regular polygon, regular polygon sides=4, minimum size=28, rotate=45, poly] at (3,0) (B) {};
\draw[poly] (A.corner 6) -- (B.corner 2); 
\draw[poly] (B.corner 4) -- (H.east) node[v0]{}; 
\draw[poly] (B.corner 1) -- +(0,18pt) node[v1]{};
\draw[poly] (B.corner 3) -- +(0,-18pt) node[v1]{};
\foreach \i in {2,4,6} \node[v0] at (A.corner \i) {};
\foreach \i in {1,3,5} \node[v1] at (A.corner \i) {};
\foreach \i in {1,3} \node[v0] at (B.corner \i) {};
\foreach \i in {2,4} \node[v1] at (B.corner \i) {};
\end{tikzpicture}
\caption{A primitive map on 18 vertices.}
\label{fig:primitive18}
\end{figure}
\end{remark}

\begin{remark}
The statement of Theorem~\ref{thm:primitiveConstruction} appears in \cite[Proposition~4]{HKKN12} as a consequence of work by Batagelj \cite{Bata89} on 3-connected quadrangulations. The link is somewhat unclear to us, so we chose to include a direct proof here.
\end{remark}

\subsection{Minimal and maximal number of rootings}
So far in this section, we have only discussed the construction of 3-connected bicubic planar maps, but we have not addressed the number of possible rootings. In theory, a primitive planar map on $2n$ vertices could have between 1 and $6n$ different rootings, depending on its symmetric properties. For example, the planar map $\Theta$ and the cube are both fully symmetric and have only one rooting each. On the other hand, the primitive map on 18 vertices shown in Figure~\ref{fig:primitive18} has 54 different rootings, meaning that every edge leads to two different rooted maps. This is the smallest primitive map having this maximality property.

In general, for $n>4$, it is easy to see that a 3-connected bicubic planar map on $2n$ vertices must have at least three rootings. But we can say more:

\begin{proposition} \label{prop:3rootings}
If a 3-connected bicubic planar map $M$ on $2n$ vertices has exactly three distinct rootings, then $M$ is either an $n$-prism with $n$ even, or $M$ is the truncated octahedron on 24 vertices; see Figure~\ref{fig:truncatedOcta}.
\end{proposition}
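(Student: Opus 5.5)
The plan is to turn the statement into a group-theoretic counting problem and then use Euler's formula. Call a \emph{flag} a pair (edge, side): an edge together with a choice of one of its two incident faces. The count quoted just before the proposition (54 rootings for the 18-vertex map, i.e.\ $6n$, with each edge giving two rooted maps) identifies rooted versions of $M$ with its $2\cdot 3n=6n$ flags. Let $\Gamma$ be the group of color-preserving map automorphisms of $M$ allowed by the rooting convention. Since $M$ is 3-connected, only the identity fixes a flag, so $\Gamma$ acts freely on flags. Hence the number of rootings is $6n/|\Gamma|$, and the hypothesis becomes $|\Gamma|=2n$: there are exactly three flag orbits, each of size $2n$.

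The first step is to read off the face structure. Let $f$ be a face of degree $2k$ and let $\Gamma_f$ be its stabilizer. Then $\Gamma_f$ acts freely on the $4k$ flags of $f$ and preserves the dark/white coloring of its corners, so $|\Gamma_f|$ divides $2k$ and $f$ meets exactly $4k/|\Gamma_f|\ge 2$ flag orbits. The face orbit of $f$ contains $2n/|\Gamma_f|$ faces, and these faces carry $4k\cdot 2n/|\Gamma_f|$ flags. Every flag lies on exactly one face and there are only three flag orbits. Therefore either a single face orbit accounts for everything, or there are several face orbits whose incidences with the three flag orbits add up to 3. I will write $s_i=|\Gamma_{f_i}|$ and $d_i$ for the face degree in the $i$-th face orbit. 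Summing the face count $\sum_i 2n/s_i=n+2$ together with the flag count $\sum_i 2d_i\cdot 2n/s_i=12n$ gives
\[\sum_i \frac{1}{s_i}=\frac12+\frac1n, \qquad \sum_i \frac{d_i}{s_i}=3.\]
The conditions $s_i\mid d_i$, $d_i$ even, and every face degree at least $4$ (forced by 3-connectivity for $n>1$) should leave only a short list. This is exactly the spherical Diophantine analysis: $\Gamma$ is a finite group of sphere symmetries of order $2n$, and the vertex stabilizers are trivial because cubic vertices with colored neighbors admit no nontrivial symmetry fixing them.

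The next step is to enumerate the solutions. The first family has two faces of degree $n$ with $s=n$ together with $n$ square faces with $s=2$. This has the combinatorics of the $n$-prism, and bipartiteness forces $n$ even. The second solution has faces of degree 4 and 6 with stabilizers of order 2 and 3 (or 4 and 6, depending on how orientation is counted) and $|\Gamma|=24$, which is the octahedral case. Here I would reconstruct the map from the face-vertex incidence pattern $(4,6,6)$ at each vertex and identify it as the truncated octahedron. Uniqueness of the map from the local data is again proved by a flag-orbit (Wythoff-type) argument: since $\Gamma$ is transitive on each flag orbit, the map is determined by the cyclic pattern of face degrees around one vertex.

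The main obstacle is excluding the remaining numeric solutions. I expect candidates of type $(4,6,10)$ from the icosahedral group, such as the great rhombicosidodecahedron-like graph on 120 vertices, and possibly tetrahedral ones with faces $(6,6,6)$ or $(4,6,6)$. I would first pin down exactly which automorphisms $\Gamma$ contains, orientation-preserving only versus also reflections and whether color-swapping maps are allowed. I would do this by checking against the paper's examples ($\Theta$ and the cube have one rooting, the prisms have three). Once that is fixed, the icosahedral candidate should be ruled out because its true flag orbit count is not $3$ under the correct group. The tetrahedral candidates should be ruled out by bipartiteness or because they coincide with prisms or the cube, which have fewer rootings.
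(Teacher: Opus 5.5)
\textbf{There is a genuine gap: the step that would exclude every map except the prism and the truncated octahedron is missing.} The reduction to $|\Gamma|=2n$ is fine once $\Gamma$ is the group of orientation-preserving automorphisms. But your two identities, $\sum_i 1/s_i=\tfrac12+\tfrac1n$ and $\sum_i d_i/s_i=3$, follow from $|\Gamma|=2n$ and Euler's formula alone; the second is just $\sum_f \deg f = 6n$. So they say nothing about how faces meet at a vertex, and they do not cut the list down to the two answers.

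Splitting $3$ as a single part is impossible. Splitting it as $2+1$ gives only $(d_1,s_1;d_2,s_2)=(n,n;4,2)$ and $(4,4;6,3)$, which are the prism data and the truncated-octahedron data. So the stabilizers in the truncated octahedron are $4$ and $3$; your ``$2$ and $3$'' and ``$4$ and $6$'' both violate your own equation. However, $1+1+1$ (three face orbits with $d_i=s_i$) gives $(4,4,n)$, $(4,6,6)$ with $n=12$, $(4,6,8)$ with $n=24$, and $(4,6,10)$ with $n=60$. You never address $(4,4,n)$ or $(4,6,8)$. Your plan to rule out $(4,6,10)$ ``because its true flag orbit count is not $3$'' presupposes that the numerical data force the map to be the Archimedean solid, and that is exactly what would need proof.

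\textbf{The missing idea is the local argument the paper's proof rests on.} If all faces are squares, $M$ is the cube, which has one rooting. Otherwise some vertex $v$ lies on a $4$-face and a $k$-face with $k\ge 6$. The three darts leaving $v$ then have pairwise different (right face, left face) degree pairs, so they lie in the three different orbits. Hence every vertex is equivalent to $v$ and $\Gamma$ acts regularly on vertices.

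Your reason for trivial vertex stabilizers is false. In a bipartite cubic map all three neighbors of a vertex have the same color, and the cube has rotations of order $3$ fixing a vertex.

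Now suppose the three faces at $v$ were in three different orbits. Let $w$ be the next vertex after $v$ around $f_1$, the face of the first orbit at $v$. The element of $\Gamma$ sending $v$ to $w$ must fix $f_1$ and rotate it, so it carries the edge $vw$ (bordering $f_1$ and, say, the second-orbit face) onto the next edge of $f_1$ at $w$. Then both edges of $f_1$ at $w$ border second-orbit faces, which is impossible. This is the paper's claim that the third face $F$ must be a $4$-face or a $k$-face. It kills your $1+1+1$ case, and in the two remaining subcases the counts $a+b=n+2$, $4a+kb=6n$ give either the prism or $k=6$, $a=6$, $b=8$, $n=12$.

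\textbf{Two bookkeeping points also need fixing.}
\begin{enumerate}
\item Settle the group instead of leaving it open. A rooting is a dart with the root face on its right and a dark tail, so $\Gamma$ must be all orientation-preserving automorphisms, color-swapping ones included, since the root fixes the coloring. The color-preserving rotations of the cube form a group of order $12$, which would give the cube two rootings instead of one.
\item Keep a single notion of flag. There are $6n$ darts, and a face of degree $d$ has $d$ of them on its right, not $2d$.
\end{enumerate}
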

\begin{proof}
Any 3-connected bicubic planar map $M$ with exactly three distinct rootings, must have a 4-face and at least one $k$-face with $k\ge 6$, $k$ even.\footnote{The only primitive map with nothing but 4-faces is the cube, and it has only one rooting.} Any graph with a vertex $v$ incident to a 4-face and a $k$-face will have at least three distinct rootings:
\begin{figure}[h!]
\begin{tikzpicture}
\node[regular polygon, regular polygon sides=4, minimum size=30, poly, fill=gray!10] at (0,0) (A) {};
\draw[root] (A.corner 1) -- (A.corner 2);
\draw[root] (A.corner 1) -- (A.corner 4);
\draw[root] (A.corner 1) -- +(0.5,0.5) node[v0]{};
\draw[poly] (A.corner 2) -- +(-0.5,0.5) node[v1]{};
\draw[poly] (A.corner 3) -- +(-0.5,-0.5) node[v0]{};
\draw[poly] (A.corner 4) -- +(0.5,-0.5) node[v1]{};
\foreach \i in {1,3}{ \node[v1] at (A.corner \i) {};}
\foreach \i in {2,4}{ \node[v0] at (A.corner \i) {};}
\node[right=2pt] at (A.corner 1) {$v$};
\node[above=15pt] at (0,0) {\scriptsize $k$-face};
\node[right=20pt] at (0,0) {$F$};
\end{tikzpicture}
\caption{Local picture around vertex $v$.}
\end{figure}

If $M$ has exactly three rootings, every vertex has to be of the same type as $v$, incident to a 4-face, to a $k$-face, and to a third face $F$ that must be either a 4-face or a $k$-face. Suppose $M$ has $2n$ vertices, $a$ 4-faces, and $b$ $k$-faces. Then we have $a+b = n+2$ and $4a +kb = 6n$.

If $F$ is a $k$-face, then every 4-face is surrounded by $k$-faces, hence $4a = 2n$ and $kb=4n$. Moreover, $a+b = n+2$ implies $2a+2b = 4a+4$, and so $b = a+2$. Therefore,
\[ k = \frac{4n}{b} = \frac{8a}{a+2} < 8 \quad \leadsto\quad k=6. \]
This further implies $a=6$, $b=8$, and $n=12$. It can be easily shown that $M$ must be the truncated octahedron on 24 vertices, see Figure~\ref{fig:truncatedOcta}.

On the other hand, if $F$ is a 4-face, then each $k$-face must be surrounded by a belt of 4-faces. This implies $b=2$, and so $a=n$ and $k=n$. This means that $M$ is the $n$-prism.
\end{proof}

\begin{figure}[ht]
\scalebox{0.85}{
\begin{tikzpicture}
\node[regular polygon, regular polygon sides=6, minimum size=32, poly] at (0,0) (A) {};
\node[regular polygon, regular polygon sides=12, minimum size=70, poly] at (0,0) (B) {};
\node[regular polygon, regular polygon sides=12, minimum size=108] at (0,0) (C) {};
\foreach \i/\j in {2/3,3/4,4/7,5/8,6/11,7/12} { \draw[poly] (A.corner \i) -- (B.corner \j);}
\foreach \i/\j in {1/2,5/6,9/10} { \draw[poly] (C.corner \i) -- (C.corner \j);}
\foreach \i in {1,2,5,6,9,10} { \draw[poly] (B.corner \i) -- (C.corner \i);}
\draw[poly] (C.corner 2) .. controls +(190:28pt) and +(110:28pt) .. (C.corner 5);
\draw[poly] (C.corner 6) .. controls +(-50:28pt) and +(230:28pt) .. (C.corner 9);
\draw[poly] (C.corner 10) .. controls +(70:28pt) and +(-10:28pt) .. (C.corner 1);
\foreach \i in {1,3,5}{ \node[v1] at (A.corner \i) {};}
\foreach \i in {2,4,6}{ \node[v0] at (A.corner \i) {};}
\foreach \i in {1,3,5,7,9,11}{ \node[v1] at (B.corner \i) {};}
\foreach \i in {2,4,6,8,10,12}{ \node[v0] at (B.corner \i) {};}
\foreach \i in {2,6,10}{ \node[v1] at (C.corner \i) {};}
\foreach \i in {1,5,9}{ \node[v0] at (C.corner \i) {};}
\end{tikzpicture}
}
\caption{Truncated octahedral graph}
\label{fig:truncatedOcta}
\end{figure}

Note that if a 3-connected bicubic planar map on $2n$ vertices is an $n$-prism, then $n$ must be even. Thus the following corollary is a direct consequence of Proposition~\ref{prop:3rootings}.

\begin{corollary}
If $n\ge 7$ is odd, then every 3-connected bicubic planar map on $2n$ vertices must have at least four distinct rootings. 
\end{corollary}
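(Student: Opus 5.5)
Since $n\ge 7$, the paper's earlier remark gives that every 3-connected bicubic planar map on $2n$ vertices with $n>4$ has at least three distinct rootings. So it suffices to show that no such map $M$ with $n$ odd has \emph{exactly} three rootings. We argue by contradiction using Proposition~\ref{prop:3rootings}.

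Suppose $M$ has exactly three rootings. Then Proposition~\ref{prop:3rootings} leaves two options: $M$ is an $n$-prism, or $M$ is the truncated octahedron on $24$ vertices.

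The truncated octahedron is excluded at once. It has $2n=24$, so $n=12$, which is even and contradicts $n$ odd.

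It remains to rule out the $n$-prism. The prism consists of two $n$-faces joined by a belt of $n$ quadrilaterals. Bipartiteness forces every face of a bicubic map to have even length, since a face is a closed walk in a bipartite graph. Hence $n$ is even, again contradicting $n$ odd.

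Both options contradict the hypothesis, so $M$ has at least four distinct rootings. The only real content lies in Proposition~\ref{prop:3rootings}, which we may assume. The step that needs the most care is the parity fact for faces, and it follows immediately from bipartiteness.
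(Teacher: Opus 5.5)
Your proof is correct and follows the paper's argument. The paper also reads the corollary off Proposition~\ref{prop:3rootings}, using that prism faces have even length and that the truncated octahedron has $n=12$. It relies implicitly on the earlier remark that primitives with $n>4$ have at least three rootings, which you make explicit; the parity step is partly redundant, since the proposition's statement already records ``$n$ even''.
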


\begin{proposition}[Asymmetric primitives]
For $n=9$ and for every $n\ge 11$, there exists a 3-connected bicubic planar map on $2n$ vertices with the maximum of $6n$ distinct rootings, obtained by either a 4-vertex or a 6-vertex insertion into an appropriate prism graph.
\end{proposition}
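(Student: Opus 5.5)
Since a rooting is determined by a directed edge whose tail is dark, a bicubic map on $2n$ vertices has at most $3n$ edges times $2$ orientations, i.e. $6n$ rootings. Equality holds precisely when the orientation-preserving automorphism group of the map acting on darts is trivial. My plan is therefore to exhibit explicit asymmetric maps. The key tool is that, for a 3-connected planar map, any automorphism must preserve the face-size multiset and all face adjacencies. It suffices to produce a face (or small configuration of faces) that is unique in the map and whose neighborhood is itself asymmetric.

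For the construction, the base is the $2m$-prism $P_{2m}$ on $4m$ vertices, which has two $2m$-faces $O$ (outer) and $I$ (inner) joined by a belt of $2m$ quadrilaterals. I use two families, matching the two insertion operations.

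\begin{itemize}
\item[(1)] \emph{6-vertex insertion.} Apply a 6-vertex insertion at a vertex $v$ of $P_{2m}$. This gives $4m+6$ vertices, i.e. $n=2m+3$, odd.
\item[(2)] \emph{4-vertex insertion.} Apply a 4-vertex insertion to $P_{2m}$ using a face $F$ and edges $e,e'$ chosen so that the sizes of $O$ and $I$ become unequal. This gives $4m+4$ vertices, i.e. $n=2m+2$, even. One natural choice is $F=O$, with $e,e'$ two edges of $O$ at odd distance chosen asymmetrically, so that $O$ splits into two faces of different sizes $a\neq b$ while $I$ keeps size $2m$. This is likely what Figure~\ref{fig:primitive18} and the $n=9$ case use.
\end{itemize}

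In each case I write down the face sizes explicitly. In (1), the hexagon insertion at $v$ changes the three faces around $v$: each of $O$, one belt square, and another belt square gains $2$ edges. It also creates three new $4$-faces, and the two affected belt squares become hexagons. In (2), the resulting sizes $a+2$, $b+2$ (or similar) together with $2m$ should be pairwise distinct. One should check these by drawing the map for small $m$.

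The asymmetry argument proceeds in four steps.

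\begin{enumerate}
\item Identify a face $X$ of unique size. In (1), this is $O$, of size $2m+2$, provided $2m+2\ne 2m$ and $2m+2\notin\{4,6\}$; this holds for large $m$. In (2), this is the largest non-prism face.
\item Any automorphism $\sigma$ fixes $X$, hence induces a rotation or reflection of its boundary cycle. Orientation-preserving $\sigma$ means a rotation.
\item The cyclic sequence of neighboring face sizes around $X$ must be invariant under that rotation. I choose the insertion so this sequence has trivial rotational symmetry. For instance, the sizes around $O$ are mostly $4$s, with two $6$s (the modified belt squares) adjacent to one another. A single contiguous block of non-$4$ faces in an otherwise-$4$ cycle kills every nontrivial rotation, except possibly a swap of the two $6$s. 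That swap is excluded by the bipartite colouring or by the position of the inner $I$-face adjacency.
\item Once $\sigma$ fixes a dart on $X$, it is the identity, since a map automorphism fixing a dart is trivial.
\end{enumerate}

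This yields $6n$ rootings.

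The main obstacle is the bookkeeping of parameters. I need to verify, for each $n=9$ and $n\ge 11$, that one of the two families realizes $2n$ vertices with a genuinely unique face and a rotation-free neighbor pattern. I also need to understand why the small cases fail. For $n=10$, $P_{8}$ with a 4-vertex insertion or $P_{7}$-type options either do not exist or force a residual reflection or rotation symmetry. However, the proposition only asserts existence for $n=9$ and $n\ge 11$, so I need not prove the nonexistence. For odd $n=2m+3\ge 11$ I use (1) with $m\ge 4$, and for $n=9$ I use (1) with $m=3$, i.e. Figure~\ref{fig:primitive18}, checking by hand. For even $n=2m+2\ge 12$ I use (2) with $m\ge 5$; the extra room there allows choosing $e,e'$ so that the split sizes $a\ne b$ differ from each other and from $2m$. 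I would first try $F$ a belt square joined to $O$, and adjust by hand until the sizes are distinct.
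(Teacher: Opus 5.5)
Your plan is the paper's: a hexagon insertion into the $2m$-prism for $n=2m+3$ ($m\ge 3$), a 4-vertex insertion into the $2m$-prism for $n=2m+2$ ($m\ge 5$), and a reduction to showing that the orientation-preserving automorphism group is trivial via a locally unique configuration. The gap is in the asymmetry check, which is the real content of the proof: it is wrong in case (1) and not carried out in case (2).

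In case (1), the two enlarged belt faces are adjacent to each other, since they share the old rung at $v$. They are \emph{not} consecutive around $O$, however. The two new 4-faces of the hexagon that touch $O$ sit between them, so the neighbours of $O$ read $6,4,4,6$ followed by $2m-2$ fours. Your ``contiguous block'' argument therefore does not apply. The correct reason is simpler: the two $6$'s cut the $(2m+2)$-cycle into gaps $3$ and $2m-1$, which differ for $m\ge 3$, so no nontrivial rotation preserves the pattern. Your fallback of excluding a swap ``by the bipartite colouring'' is not available. Every one of the $6n$ darts can serve as a root, with the colouring then chosen so the tail is dark. Hence colour-reversing automorphisms must be ruled out too, which is also why your opening count phrased via ``tail is dark'' is muddled. (The paper instead uses the unique vertex $v$ incident to three 4-faces and its unique edge into the unique $(2m+2)$-face.)

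In case (2) you never fix the insertion, and your description of it is off. Suppose $e,e'$ on $O$ are separated by $k$ and $2m-2-k$ edges, with $k$ odd. Then $O$ splits into \emph{three} faces, of sizes $k+3$, $4$ and $2m+1-k$, and the two belt squares behind $e$ and $e'$ become hexagons. Distinct sizes alone do not finish the argument; you still have to check the neighbour pattern of your distinguished face. Taking $k=3$ works; this is the paper's split into a $6$-face, a $4$-face and a $(2m-2)$-face. For $m\ge 5$ the $(2m-2)$-face is unique, and its neighbours read $6$, then $2m-5$ fours, then $6,4$. The gaps are $2m-4\neq 2$, so no rotation survives.

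By contrast, your suggested first attempt with $F$ a belt square fails outright. The only admissible pairs in a 4-face are opposite edges. Taking its $O$-edge and $I$-edge gives the $(2m+2)$-prism. Subdividing its two rungs gives a map with a half-turn exchanging $O$ and $I$.

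Finally, the map of Figure~\ref{fig:primitive18} is the $m=3$ instance of case (1), not a case-(2) map; $n=9$ is odd.
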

\begin{proof}
Since the number of rootings of a planar map $M$ with $3n$ edges equals $6n$ divided by the number of orientation preserving automorphism on $M$, the maximal number of rootings occurs precisely when there is only one such automorphism.

It was already mentioned that the map $M$ shown in Figure~\ref{fig:primitive18} can be obtained through a 6-vertex insertion into the $6$-prism graph. Note that $M$ has a single vertex incident to three 4-faces, and only one edge connecting that vertex to its unique 8-face. Clearly, the identity is the only orientation preserving automorphism on $M$, so it has $54$ ($=6n$) rootings.

A similar insertion into the $8$-prism graph generates a 3-connected bicubic planar map on 22 vertices ($n=11$), and in general, a 6-vertex insertion into a $2m$-prism generates a primitive map on $4m+6$ vertices ($n=2m+3$). With this process, we get a family of primitive maps on $2n$ vertices for $n=9, 11, 13,\dots$, having $6n$ distinct rootings.   

Now, we start with the $10$-prism $P$ and perform a 4-vertex insertion such that the inner 10-face of $P$ is subdivided into an 8-face, a 4-face, and a 6-face. Let $\hat P$ denote the resulting primitive map on 24 vertices ($n=12$). Note that $F_1$ is the only 8-face of $\hat P$, and $F_2$ is the only 4-face not adjacent to another 4-face in $\hat P$ (see Figure~\ref{fig:primitive24}). In other words, the edge incident to both $F_1$ and $F_2$ is unique, and the identity is the only orientation preserving automorphism on $\hat P$. Thus, $\hat P$ has 72 ($=6n$) rootings.

\begin{figure}[ht]
\scalebox{0.75}{
\begin{tikzpicture}
\begin{scope}
\node[regular polygon, regular polygon sides=10, minimum size=80, poly] at (0,0) (A) {};
\node[regular polygon, regular polygon sides=10, minimum size=120, poly] at (0,0) (B) {};
\foreach \i in {1,...,10} { \draw[poly] (A.corner \i) -- (B.corner \i);}
\draw[poly,insertcol] (A.corner 2)+(0.28,0) node[v1]{} -- +(1.10,-2.5) node[v0]{};
\draw[poly,insertcol] (A.corner 2)+(0.58,0) node[v0]{} -- +(1.35,-2.33) node[v1]{};
\foreach \i in {1,3,5,7,9}{ \node[v1] at (A.corner \i) {};}
\foreach \i in {2,4,6,8,10}{ \node[v0] at (A.corner \i) {};}
\foreach \i in {1,3,5,7,9}{ \node[v0] at (B.corner \i) {};}
\foreach \i in {2,4,6,8,10}{ \node[v1] at (B.corner \i) {};}
\node at (2.7,0) {$\cong$};
\end{scope}
\begin{scope}[xshift=140]
\node[regular polygon, regular polygon sides=8, minimum size=100, poly, white] at (0,0) (C) {};
\node[regular polygon, regular polygon sides=6, minimum size=92, rotate=30, poly, white] at (3,0) (D) {};
\node[regular polygon, regular polygon sides=8, minimum size=60, poly] at (0,0) (A) {};
\node[regular polygon, regular polygon sides=6, minimum size=46, rotate=30, poly] at (3,0) (B) {};
\draw[poly] (C.corner 1) -- (C.corner 2) -- (C.corner 3) -- (C.corner 4) -- (C.corner 5) -- (C.corner 6);
\draw[poly] (D.corner 4) -- (D.corner 5) -- (D.corner 6) -- (D.corner 1);
\draw[poly] (A.corner 7) -- (B.corner 3);
\draw[poly] (A.corner 8) -- (B.corner 2);
\draw[poly] (C.corner 1) -- (D.corner 1);
\draw[poly] (C.corner 6) -- (D.corner 4);
\foreach \i in {1,...,6} \draw[poly] (A.corner \i) -- (C.corner \i);
\foreach \i in {1,4,5,6} \draw[poly] (B.corner \i) -- (D.corner \i);
\foreach \i in {2,4,6,8} \node[v1] at (A.corner \i) {};
\foreach \i in {1,3,5,7} \node[v0] at (A.corner \i) {};
\foreach \i in {1,3,5} \node[v1] at (B.corner \i) {};
\foreach \i in {2,4,6} \node[v0] at (B.corner \i) {};
\foreach \i in {2,4,6} \node[v0] at (C.corner \i) {};
\foreach \i in {1,3,5} \node[v1] at (C.corner \i) {};
\foreach \i in {1,5} \node[v0] at (D.corner \i) {};
\foreach \i in {4,6} \node[v1] at (D.corner \i) {};
\node at (0,0) {$F_1$};
\node[left=0pt] at (2,0) {$F_2$};
\node at (4,1.7) {$\hat P$};
\end{scope}
\end{tikzpicture}
}
\caption{The primitive map $\hat P$.}
\label{fig:primitive24}
\end{figure}

In general, a 4-vertex insertion into a $2m$-prism ($m\ge 5$) in such a way that the inner $2m$-face is subdivided into a $(2m-2)$-face, a 4-face, and a 6-face, gives a primitive map on $4m+4$ vertices ($n=2m+2$) with the identity as the only orientation preserving automorphism. With this process, we can generate a family of primitive maps on $2n$ vertices for $n=12, 14, 16,\dots$, having $6n$ distinct rootings.   
\end{proof}

\section{Appendix}

The goal of this appendix is to further illustrate our construction from Section~\ref{sec:construction}.

We start with $n=3$ and build the library of all 12 rooted bicubic planar maps on $6$ ($=2n$) vertices using decorated Dyck paths of semilength 9 ($=3n$), see Table~\ref{tab:library9edges} and Figure~\ref{fig:library9edges}. In this case, we only need Dyck paths of the form $\U^3\D^i\U^3\D^j\U^3\D^k$ with $i,j,k\ge 1$, with ascents decorated by the primitive map $\Theta$. Note that the example given in the proof of Theorem~\ref{thm:mainBijection} used the Dyck path $\U^3\D^2\U^3\D\U^3\D^6$ to produce the planar map $(\Theta\merge_{\bf 2}\,\Theta)\merge_{\bf 4}\,\Theta$.

\begin{table}[ht]
\def\R{\rule[-1ex]{0ex}{3.6ex}}
\begin{tabular}{l|c}
\;\; Dyck path & Rooted bicubic map \\[1pt] \hline
\R $\U^3\D\U^3\D\U^3\D^7$ &  $(\Theta\merge_{\bf 1}\,\Theta)\merge_{\bf 4}\,\Theta$ \\
$\U^3\D\U^3\D^2\U^3\D^6$ &  $(\Theta\merge_{\bf 1}\,\Theta)\merge_{\bf 5}\,\Theta$ \\
$\U^3\D\U^3\D^3\U^3\D^5$ & $(\Theta\merge_{\bf 1}\,\Theta)\merge_{\bf 6}\,\Theta$ \\
$\U^3\D\U^3\D^4\U^3\D^4$ & $(\Theta\merge_{\bf 1}\,\Theta)\merge_{\bf 2}\,\Theta$ \\
$\U^3\D\U^3\D^5\U^3\D^3$ & $(\Theta\merge_{\bf 1}\,\Theta)\merge_{\bf 3}\,\Theta$ \\
$\U^3\D^2\U^3\D\U^3\D^6$ & $(\Theta\merge_{\bf 2}\,\Theta)\merge_{\bf 4}\,\Theta$ \\
$\U^3\D^2\U^3\D^2\U^3\D^5$ & $(\Theta\merge_{\bf 2}\,\Theta)\merge_{\bf 5}\,\Theta$ \\
$\U^3\D^2\U^3\D^3\U^3\D^4$ & $(\Theta\merge_{\bf 2}\,\Theta)\merge_{\bf 6}\,\Theta$ \\
$\U^3\D^2\U^3\D^4\U^3\D^3$ & $(\Theta\merge_{\bf 2}\,\Theta)\merge_{\bf 3}\,\Theta$ \\
$\U^3\D^3\U^3\D\U^3\D^5$ & $(\Theta\merge_{\bf 3}\,\Theta)\merge_{\bf 4}\,\Theta$ \\
$\U^3\D^3\U^3\D^2\U^3\D^4$ & $(\Theta\merge_{\bf 3}\,\Theta)\merge_{\bf 5}\,\Theta$ \\
$\U^3\D^3\U^3\D^3\U^3\D^3$ & $(\Theta\merge_{\bf 3}\,\Theta)\merge_{\bf 6}\,\Theta$
\end{tabular}
\bigskip
\caption{Construction of rooted bicubic planar maps on $6$ vertices.}
\label{tab:library9edges}
\end{table}

\begin{figure}[ht]
\begin{tikzpicture}[scale=0.8]
\begin{scope}
\draw[root] (0,0) .. controls +(45:15pt) and +(135:15pt) .. (2,0);
\draw[root] (0,0) .. controls +(-45:15pt) and +(-135:15pt) .. (2,0);
\draw[root] (2,1.5) -- (0,1.5);
\draw[root] (0,3) .. controls +(45:15pt) and +(135:15pt) .. (2,3);
\draw[root] (0,3) .. controls +(-45:15pt) and +(-135:15pt) .. (2,3);
\foreach \y in {0,3} {\draw[root] (0,\y) -- (0,1.5);}
\foreach \y in {0,3} {\draw[root] (2,1.5) -- (2,\y);}
\node[v1] at (0,0) {}; \node[v0] at (2,0) {};
\node[v0] at (0,1.5) {}; \node[v1] at (2,1.5) {};
\node[v1] at (0,3) {}; \node[v0] at (2,3) {};
\node[above=5] at (1,3) {\scriptsize $[3,5]$};
\node[above] at (1,1.5) {\scriptsize $[2,3]$};
\node[below=5] at (1,0) {\scriptsize $[2,6]$};
\node[left] at (0,2.2) {\scriptsize $[1,5]$};
\node[left] at (0,0.9) {\scriptsize $[1,6]$};
\node[right] at (2,2.2) {\scriptsize $[1,2]$};
\node[right] at (2,0.9) {\scriptsize $[1,3]$};
\draw[gray] (1,2.8) to[out=90, in=-60] (-0.5,3.5) node[above=-2,black] {\scriptsize $[2,5]$};
\draw[gray] (1,0.2) to[out=-90, in=60] (-0.5,-0.6) node[below=-2,black] {\scriptsize $[3,6]$};
\end{scope}
\begin{scope}[xshift=190,yshift=45]
\node[regular polygon, regular polygon sides=6, minimum size=80] at (0,0) (A) {};
\draw[root] (A.corner 3) .. controls +(105:15pt) and +(195:15pt) .. (A.corner 2);
\draw[root] (A.corner 3) .. controls +(15:15pt) and +(-75:15pt) .. (A.corner 2);
\draw[root] (A.corner 3) -- (A.corner 4);
\draw[poly] (A.corner 4) .. controls +(45:15pt) and +(135:15pt) .. (A.corner 5);
\draw[poly] (A.corner 4) .. controls +(-45:15pt) and +(-135:15pt) .. (A.corner 5);
\draw[poly] (A.corner 6) .. controls +(75:15pt) and +(-15:15pt) .. (A.corner 1);
\draw[poly] (A.corner 6) .. controls +(165:15pt) and +(-105:15pt) .. (A.corner 1);
\foreach \i/\j in {1/2,5/6}{
	\draw[poly] (A.corner \i) -- (A.corner \j);
}
\foreach \i in {1,3,5}{
	\node[v1] at (A.corner \i) {};
}
\foreach \i in {2,4,6}{
	\node[v0] at (A.corner \i) {};
}
\node[left=6] at (-1,-1) {\scriptsize $[1,4]$};
\node[left=12] at (-1,1) {\scriptsize $[3,4]$};
\node[right=-2] at (-1,0.5) {\scriptsize $[2,4]$};
\end{scope}
\node[below=30pt] at (3.8,0) {\small $[u,v] \leadsto (\Theta\merge_{\bf u}\Theta)\merge_{\bf v}\Theta$};
\end{tikzpicture}
\caption{All rooted bicubic planar maps on 6 vertices.}
\label{fig:library9edges}
\end{figure}

We finish with a much larger example, $n=14$, involving the Dyck path 
\[ \U^{18}\D^6\U^3\D^8\U^{18}\D^4\U^3\D^{24} \]
of semilength $42$ ($=3n$), with ascents decorated by $\Theta$ and two distinct rooted 6-prisms as shown in Figure~\ref{fig:21DyckPath}. Let $\widehat P$ denote the above Dyck path together with the rooted primitive maps decorating its ascents.

\begin{figure}[ht]
\begin{tikzpicture}[scale=0.16]
\dyckpath{0,0}{42}{1,1,1,1,1,1,1,1,1,1,1,1,1,1,1,1,1,1,0,0,0,0,0,0,1,1,1,0,0,0,0,0,0,0,0,1,1,1,1,1,1,1,1,1,1,1,1,1,1,1,1,1,1,0,0,0,0,1,1,1,0,0,0,0,0,0,0,0,0,0,0,0,0,0,0,0,0,0,0,0,0,0,0,0}
\node[label,left=0] at (18,18) {\tiny $1$};
\node[label,left=0] at (12,12) {\tiny $6$};
\node[label,left=0] at (7,7) {\tiny $11$};
\node[label,left=0] at (1,1) {\tiny $18$};
\node[label,left=0] at (49,21) {\tiny $25$};
\foreach \x/\y in {29/14.6,62/23.6} {\node[left=12pt] at (\x,\y) {$\Theta$};}
\draw[dashed] (12,12) -- (24,12);
\draw[dashed] (7,7) -- (35,7);
\draw[dashed] (49,21) -- (56,21);
\begin{scope}[xshift=110,yshift=365]
\node[regular polygon, regular polygon sides=6, minimum size=20, poly] at (0,0) (A) {};
\node[regular polygon, regular polygon sides=6, minimum size=45, poly] at (0,0) (B) {};
\foreach \i in {1,...,6} {\draw[poly] (A.corner \i) -- (B.corner \i);}
\draw[root] (B.corner 4) -- (B.corner 3);
\foreach \i in {1,3,5}{
	\node[v1] at (A.corner \i) {};
	\node[v0] at (B.corner \i) {};
}
\foreach \i in {2,4,6}{
	\node[v0] at (A.corner \i) {};
	\node[v1] at (B.corner \i) {};
}
\end{scope}
\begin{scope}[xshift=1090,yshift=550]
\node[regular polygon, regular polygon sides=6, minimum size=20, poly] at (0,0) (A) {};
\node[regular polygon, regular polygon sides=6, minimum size=45, poly] at (0,0) (B) {};
\foreach \i in {1,...,6} {\draw[poly] (A.corner \i) -- (B.corner \i);}
\draw[root] (B.corner 4) -- (A.corner 4);
\foreach \i in {1,3,5}{
	\node[v1] at (A.corner \i) {};
	\node[v0] at (B.corner \i) {};
}
\foreach \i in {2,4,6}{
	\node[v0] at (A.corner \i) {};
	\node[v1] at (B.corner \i) {};
}
\end{scope}
\end{tikzpicture}
\caption{Decorated Dyck path $\widehat P$.}
\label{fig:21DyckPath}
\end{figure}

Following the construction given in Section~\ref{sec:construction}, the associated rooted bicubic planar map is the one obtained by the merging operations encoded in $\widehat P$. Figure~\ref{fig:bigMerge} shows these operations with the corresponding labeled primitive components, and Figure~\ref{fig:bigMap} shows the assembled rooted bicubic map $M_{\widehat P}$ corresponding to the decorated Dyck path $\widehat P$.

\def\newmerge{\!+~\hskip-10pt+}

\begin{figure}[ht]
\begin{tikzpicture}
\begin{scope}
\node at (2,0) {$\newmerge_{\bf 6}$};
\node[regular polygon, regular polygon sides=6, minimum size=32, poly] at (0,0) (A) {};
\node[regular polygon, regular polygon sides=6, minimum size=78, poly] at (0,0) (B) {};
\foreach \i in {1,...,6} {\draw[poly] (A.corner \i) -- (B.corner \i);}
\draw[root] (B.corner 4) -- (B.corner 3);
\foreach \i in {1,3,5}{
	\node[v1] at (A.corner \i) {};
	\node[v0] at (B.corner \i) {};
}
\foreach \i in {2,4,6}{
	\node[v0] at (A.corner \i) {};
	\node[v1] at (B.corner \i) {};
}
\node[label,below=16, left=28] at (0,0) {\tiny $1$};
\node[label,below=4, left=22] at (0,0) {\tiny $2$};
\node[label,below=9, left=10] at (0,0) {\tiny $3$};
\node[label,below=20, left=11] at (0,0) {\tiny $4$};
\node[label,below=12] at (0,0) {\tiny $5$};
\node[label,below=26, right=5] at (0,0) {\tiny $6$};
\node[label,below=32] at (0,0) {\tiny $7$};
\node[label,below=20, right=26] at (0,0) {\tiny $8$};
\node[label,above=20, right=27] at (0,0) {\tiny $9$};
\node[label,above=32] at (0,0) {\tiny $10$};
\node[label,above=20, left=27] at (0,0) {\tiny $11$};
\node[label,above=22, left=10] at (0,0) {\tiny $12$};
\node[label,above=10, left=10] at (0,0) {\tiny $13$};
\node[label,above=12] at (0,0) {\tiny $14$};
\node[label,above=7, right=-2] at (0,0) {\tiny $15$};
\node[label,below=5, right=-2] at (0,0) {\tiny $16$};
\node[label,below=4, right=18] at (0,0) {\tiny $17$};
\node[label,above=22, right=10] at (0,0) {\tiny $18$};
\end{scope}

\begin{scope}[xshift=72,scale=0.85]
\node at (2.85,0) {$\newmerge_{\bf 11}$};
\draw[poly] (0,0) .. controls +(45:23pt) and +(135:23pt) .. (2,0);
\draw[root] (0,0) .. controls +(-45:23pt) and +(-135:23pt) .. (2,0);
\draw[poly] (0,0) -- (2,0);
\node[v1] at (0,0) {}; \node[v0] at (2,0) {};
\node[label,below=9pt] at (1,0) {\tiny $19$};
\node[label,above=9pt] at (1,0) {\tiny $20$};
\node[label,above=-2pt] at (1,0) {\tiny $21$};
\end{scope}

\begin{scope}[xshift=196]
\node at (2.1,0) {$\newmerge_{\bf 25}$};
\node[regular polygon, regular polygon sides=6, minimum size=32, poly] at (0,0) (A) {};
\node[regular polygon, regular polygon sides=6, minimum size=78, poly] at (0,0) (B) {};
\foreach \i in {1,...,6} {\draw[poly] (A.corner \i) -- (B.corner \i);}
\draw[root] (B.corner 4) -- (A.corner 4);
\foreach \i in {1,3,5}{
	\node[v1] at (A.corner \i) {};
	\node[v0] at (B.corner \i) {};
}
\foreach \i in {2,4,6}{
	\node[v0] at (A.corner \i) {};
	\node[v1] at (B.corner \i) {};
}
\node[label,below=20, left=11] at (0,0) {\tiny $22$};
\node[label,below=12] at (0,0) {\tiny $23$};
\node[label,below=26, right=1] at (0,0) {\tiny $24$};
\node[label,below=32] at (0,0) {\tiny $25$};
\node[label,below=20, right=27] at (0,0) {\tiny $26$};
\node[label,above=20, right=27] at (0,0) {\tiny $27$};
\node[label,above=32] at (0,0) {\tiny $28$};
\node[label,above=20, left=27] at (0,0) {\tiny $29$};
\node[label,below=16, left=28] at (0,0) {\tiny $30$};
\node[label,below=4, left=22] at (0,0) {\tiny $31$};
\node[label,below=9, left=10] at (0,0) {\tiny $32$};
\node[label,above=10, left=10] at (0,0) {\tiny $33$};
\node[label,above=12] at (0,0) {\tiny $34$};
\node[label,above=7, right=-2] at (0,0) {\tiny $35$};
\node[label,below=5, right=-2] at (0,0) {\tiny $36$};
\node[label,below=4, right=18] at (0,0) {\tiny $37$};
\node[label,above=22, right=11] at (0,0) {\tiny $38$};
\node[label,above=22, left=11] at (0,0) {\tiny $39$};
\end{scope}

\begin{scope}[xshift=196,yshift=100]
\node[rotate=-90] at (0,-1.7) {$\cong$};
\node[regular polygon, regular polygon sides=4, minimum size=44, poly] at (0,0) (A) {};
\node[regular polygon, regular polygon sides=4, minimum size=90, poly] at (0,0) (B) {};
\foreach \i in {1,...,4} {\draw[poly] (A.corner \i) -- (B.corner \i);}
\draw[root] (B.corner 2) -- (B.corner 3);
\foreach \i in {1,3}{
	\node[v0] at (A.corner \i) {};
	\node[v1] at (B.corner \i) {};
}
\foreach \i in {2,4}{
	\node[v1] at (A.corner \i) {};
	\node[v0] at (B.corner \i) {};
}
\draw[poly] (A.corner 4)+(-0.37,0) node[v0]{} -- +(-0.37,1.09) node[v1]{};
\draw[poly] (A.corner 4)+(-0.74,0) node[v1]{} -- +(-0.74,1.09) node[v0]{};
\node[label,left=30] at (0,0) {\tiny $22$};
\node[label,below=30] at (0,0) {\tiny $23$};
\node[label,right=30] at (0,0) {\tiny $24$};
\node[label,above=30] at (0,0) {\tiny $25$};
\node[label,above=19, right=18] at (0,0) {\tiny $26$};
\node[label,above=20,right=3] at (0,0) {\tiny $27$};
\node[label,above=14] at (0,0) {\tiny $28$};
\node[label,above=20,left=2.5] at (0,0) {\tiny $29$};
\node[label,above=19, left=18] at (0,0) {\tiny $30$};
\node[label,left=12] at (0,0) {\tiny $31$};
\node[label,below=19, left=18] at (0,0) {\tiny $32$};
\node[label,below=20,left=2.5] at (0,0) {\tiny $33$};
\node[label,below=14] at (0,0) {\tiny $34$};
\node[label,below=20,right=3] at (0,0) {\tiny $35$};
\node[label,below=19, right=18] at (0,0) {\tiny $36$};
\node[label,right=3] at (0,0) {\tiny $37$};
\node[label] at (0,0) {\tiny $38$};
\node[label,left=2.5] at (0,0) {\tiny $39$};
\end{scope}

\begin{scope}[xshift=272,scale=0.85]
\draw[poly] (0,0) .. controls +(45:23pt) and +(135:23pt) .. (2,0);
\draw[root] (0,0) .. controls +(-45:23pt) and +(-135:23pt) .. (2,0);
\draw[poly] (0,0) -- (2,0);
\node[v1] at (0,0) {}; \node[v0] at (2,0) {};
\node[label,below=9pt] at (1,0) {\tiny $40$};
\node[label,above=9pt] at (1,0) {\tiny $41$};
\node[label,above=-2pt] at (1,0) {\tiny $42$};
\end{scope}
\end{tikzpicture}
\caption{Merging operations encoded in $\widehat P$.}
\label{fig:bigMerge}
\end{figure}

\begin{figure}
\begin{tikzpicture}
\begin{scope}
\node[regular polygon, regular polygon sides=6, minimum size=45, poly] at (0,0) (A) {};
\node[regular polygon, regular polygon sides=6, minimum size=130] at (0,0) (B) {};
\draw[poly] (B.corner 4) -- (B.corner 5) -- (B.corner 6) -- (B.corner 1) -- (B.corner 2) -- +(-0.4,0.24);
\draw[poly] (B.corner 3) -- +(-0.4,0.27);
\draw[poly] (A.corner 5) -- +(0.23,-0.4) .. controls +(-20:8pt) and +(85:8pt) .. +(0.52,-0.9) -- (B.corner 5);
\draw[poly] (A.corner 5) -- +(0.23,-0.4) .. controls +(250:8pt) and +(175:8pt) .. +(0.52,-0.9) -- (B.corner 5);
\draw[poly] (A.corner 5) -- +(0.23,-0.4) node[v0]{};
\draw[poly] (B.corner 5) -- +(-0.23,0.4) node[v1]{};
\foreach \i in {1,2,3,4,6} {\draw[poly] (A.corner \i) -- (B.corner \i);}
\draw[root] (B.corner 4) -- (B.corner 3);
\foreach \i in {1,3,5}{
	\node[v1] at (A.corner \i) {};
	\node[v0] at (B.corner \i) {};
}
\foreach \i in {2,4,6}{
	\node[v0] at (A.corner \i) {};
	\node[v1] at (B.corner \i) {};
}
\end{scope}
\begin{scope}[xshift=-90,yshift=53, rotate=150]
\node[regular polygon, regular polygon sides=4, minimum size=44, rotate=150, poly] at (0,0) (A) {};
\node[regular polygon, regular polygon sides=4, minimum size=90, rotate=150] at (0,0) (B) {};
\draw[poly] (B.corner 3) -- (B.corner 4) -- (B.corner 1); 
\draw[poly] (-0.45,1.1) .. controls +(45:12pt) and +(135:12pt) .. (0.45,1.1);
\draw[poly] (-0.45,1.15) .. controls +(-45:12pt) and +(-135:12pt) .. (0.45,1.15);
\draw[poly] (B.corner 1) -- +(-0.7,0) node[v1]{};
\draw[poly] (B.corner 2) -- +(0.7,0) node[v0]{};
\foreach \i in {1,...,4} {\draw[poly] (A.corner \i) -- (B.corner \i);}
\foreach \i in {1,3}{
	\node[v1] at (A.corner \i) {};
	\node[v0] at (B.corner \i) {};
}
\foreach \i in {2,4}{
	\node[v0] at (A.corner \i) {};
	\node[v1] at (B.corner \i) {};
}
\draw[poly] (A.corner 4)+(-0.37,0) node[v1]{} -- +(-0.37,1.09) node[v0]{};
\draw[poly] (A.corner 4)+(-0.74,0) node[v0]{} -- +(-0.74,1.09) node[v1]{};
\end{scope}
\end{tikzpicture}
\caption{Rooted bicubic planar map $M_{\widehat P}$ corresponding to $\widehat P$.}
\label{fig:bigMap}
\end{figure}


\end{document}